\newcommand{\RR}{\mathbb{R}} 
\newcommand{\CC}{\mathbb{C}} 
\newcommand{\RE}{\mathrm{Re}\,}
\newcommand{\IM}{\mathrm{Im}\,}
\newcommand{\Aut}{\mathrm{Aut}}
\newcommand{\wt}{\widetilde}
\newcommand{\w}{\wedge}
\newcommand{\beq}{\begin{eqnarray*}}
\newcommand{\eeq}{\end{eqnarray*}}
\newcommand{\beg}{\begin{equation}}
\newcommand{\eeg}{\end{equation}}
\newcommand{\om}{\omega}
\newcommand{\Om}{\Omega}
\newcommand{\la}{\lambda}
\newcommand{\al}{\alpha}
\newcommand{\be}{\beta}
\newcommand{\ga}{\gamma}
\newcommand{\ov}{\overline}
\newtheorem{thm}{Theorem}[section]
\newtheorem{lem}[thm]{Lemma}
\newtheorem{prop}[thm]{Proposition}
\theoremstyle{definition}
\theoremstyle{remark}
\numberwithin{equation}{section}
\title[Semicontinuity property for automorphism groups]{A generalization of the Greene-Krantz theorem for the semicontinuity property of automorphism groups}
\author{Jae-Cheon Joo}
\keywords{semicontinuity property, CR structures, CR Yamabe equation}
\subjclass[2010]{32V05, 32M05, 53C56}
\address{Department of Mathematics and Informatics, University of Wuppertal, Gaussstr. 20, D-42119 Wuppertal, Germany}
\email{jcjoo91@uni-wuppertal.de}
\begin{document}

\begin{abstract}
A CR version of the Greene-Krantz theorem \cite{GK} for the semicontinuity of complex automorphism groups will be provided. This is not only a generalization but also an intrinsic interpretation of the Greene-Krantz theorem. 
\end{abstract}

\maketitle

\section{Introduction}
By the uppersemicontinuity, or simply, the semicontinuity in geometry, we mean the property that the set of symmetries of a geometric structure should not decrease at a limit of a sequence of the structures. For instance, a sequence of ellipses in the Euclidean plane can converge to a circle, while a sequence of circles cannot converge to non-circular ellipse. This property seems as natural as the second law of thermodynamics in physics, but we still need to make it clear in mathematical terminology. A symmetry for a geometric structure is described as a transformation on a space with the geometric structure. The set of transformations becomes a group with respect to the composition operator. Therefore, the semicontinuity can be understood as a non-decreasing property of the transformation groups at the limit of a sequence of geometric structures. One of the strongest version describing the semicontinuity was obtained by Ebin for the Riemannian structures on compact manifolds in terms of conjugations by diffeomorphisms.

\begin{thm}[\cite{Eb}]\label{t;Ebin}
Let $M$ be a $C^\infty$-smooth compact manifold and let $\{g_j: j=1,2,...\}$ be a sequence of $C^\infty$-smooth Riemannian structures which converges to a Riemannian metric $g_0$ in $C^\infty$-sense. Then for each sufficiently large $j$, there exists a diffeomorphism $\phi_j:M\rightarrow M$ suh that $\phi_j\circ I_j \circ \phi_j^{-1}$ is a Lie subgroup of $I_0$, where $I_j$ and $I_0$ represent the isometry groups for $g_j$ and $g_0$, respectively.
\end{thm}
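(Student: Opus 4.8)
The plan is to combine the compactness of Riemannian isometry groups with Ebin's slice theorem for the right action of the diffeomorphism group $\mathcal{D}=\mathrm{Diff}(M)$ on the space $\mathcal{M}$ of Riemannian metrics, the latter being the genuinely hard ingredient. Throughout, $\mathcal{D}$ acts on $\mathcal{M}$ by pullback, $g\cdot\phi=\phi^*g$, so that the isotropy group of $g$ is exactly its isometry group $I_g=\{\phi:\phi^*g=g\}$; by the Myers--Steenrod theorem each $I_g$ is a compact Lie group, and one computes directly that $I_{\phi^*g}=\phi^{-1}\circ I_g\circ\phi$.

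First I would establish the ``soft'' semicontinuity by an Arzel\`a--Ascoli argument. Since $g_j\to g_0$ in $C^\infty$, the induced distance functions converge uniformly on the compact manifold $M$ and the elements of $I_j$ are uniformly Lipschitz; hence any sequence $\phi_j\in I_j$ has a subsequence converging in $C^\infty$ to a $g_0$-isometry, using that isometries are rigid (determined by a single $1$-jet and satisfying an elliptic system in $g_0$-harmonic coordinates, which yields uniform higher-order bounds). Consequently, for every neighborhood $U$ of $I_0$ in $\mathcal{D}$ one has $I_j\subseteq U$ for all large $j$; otherwise a sequence $\phi_j\in I_j\setminus U$ would subconverge to an element of $I_0\subseteq U$, a contradiction.

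Next I would invoke Ebin's slice theorem: at $g_0$ there is an $I_0$-invariant submanifold (slice) $S\ni g_0$ transverse to the orbit $g_0\cdot\mathcal{D}$ such that the tube map $\mathcal{D}\times_{I_0}S\to\mathcal{M}$, $[\phi,g]\mapsto\phi^*g$, is a diffeomorphism onto a neighborhood of $g_0$. This is where the main difficulty lies: $\mathcal{M}$ and $\mathcal{D}$ are infinite dimensional, so one passes to Sobolev completions $\mathcal{M}^s$ and $\mathcal{D}^{s+1}$, which are Hilbert manifolds, constructs $S$ by gauge-fixing through the $L^2$-orthogonal splitting $S^2T^*M=\operatorname{im}\delta_{g_0}^*\oplus\ker\delta_{g_0}$ together with the inverse function theorem, and finally applies elliptic regularity to descend back to the $C^\infty$ category. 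Granting this, for large $j$ the metric $g_j$ lies in the tube, so I can choose $\psi_j\to\mathrm{id}$ with $\tilde g_j:=\psi_j^*g_j\in S$ and $\tilde g_j\to g_0$; by the remark above, $I_{\tilde g_j}=\psi_j^{-1}\circ I_j\circ\psi_j$.

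Finally I would combine the two ingredients. By the second step applied to $\tilde g_j\to g_0$, the group $I_{\tilde g_j}$ lies in an arbitrarily small neighborhood of $I_0$ once $j$ is large. If $\alpha\in I_{\tilde g_j}$ is close to $I_0$, then $[\alpha,\tilde g_j]$ and $[\mathrm{id},\tilde g_j]$ have the same image $\tilde g_j$ under the tube map, so injectivity of the tube forces $\alpha\in I_0$. Hence $\psi_j^{-1}\circ I_j\circ\psi_j=I_{\tilde g_j}\subseteq I_0$ for all large $j$, and setting $\phi_j=\psi_j^{-1}$ gives $\phi_j\circ I_j\circ\phi_j^{-1}\subseteq I_0$. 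Being the image of the compact group $I_j$ under conjugation, this is a compact, hence closed, hence Lie subgroup of $I_0$, which completes the proof.
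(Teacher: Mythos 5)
Your proof is correct, but note first that this paper never actually proves Theorem \ref{t;Ebin}: it is quoted from \cite{Eb} as background, so the meaningful comparison is with Ebin's original argument and with the way the paper organizes its own CR analogue. What you wrote is essentially Ebin's original route: the hard content is the slice theorem for the $\mathrm{Diff}(M)$-action on the space of metrics, and your ``injectivity of the tube'' step is exactly the defining slice property ($\eta^*S\cap S\neq\emptyset\Rightarrow\eta\in I_0$, which holds globally because the action is proper, so your hypothesis that $\alpha$ be close to $I_0$ is not even needed there); the identity $I_{\psi^*g}=\psi^{-1}\circ I_g\circ\psi$ and Cartan's closed-subgroup theorem then finish it, and you even get $\phi_j\to\mathrm{id}$ for free from the tube parametrization. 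The paper, by contrast, extracts from \cite{Eb} (cf.\ \cite{GrKa, Kim}) only the purely group-theoretic semicontinuity statement, Theorem \ref{t;gp}: compact subgroups $G_j\subset\mathrm{Diff}(M)$ converging to a compact subgroup $G_0$ in the $C^\infty$-topology are conjugate into $G_0$ for large $j$. Within that framework, your first step alone (the Arzel\`a--Ascoli/Myers--Steenrod argument showing that every sequence $\phi_j\in I_j$ subconverges in $C^\infty$ to an element of $I_0$, hence $I_j$ eventually lies in any neighborhood of $I_0$) already yields Theorem \ref{t;Ebin}, with no slice construction on the space of metrics. The trade-off is this: your route gauge-fixes the \emph{metric}, which is clean but is tied to the existence of a slice for Riemannian structures; the paper's route isolates the group-level statement, which is precisely what survives in settings where no slice theorem is available --- this is why the CR generalization in this paper proceeds by proving convergence of the automorphism groups (Proposition \ref{p;compact}, via CR Yamabe solutions and subelliptic estimates playing the role of your elliptic-regularity step) and then invoking Theorem \ref{t;gp}, i.e.\ it generalizes your step 1 rather than your steps 2--3.
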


The group of holomorphic automorphisms on a complex manifold plays a role of the group of symmetry with respect to the complex structure. By Cartan's theorem (cf. \cite{GKK}), the automorphism group of a bounded domain in the complex Euclidean space turns out to be a Lie group with the compat-open topology on the domain. Greene and Krantz proved the following theorem for the semicontinuity property of automorphism groups of bounded strongly pseudoconvex domains.

\begin{thm}[\cite{GK}]\label{t;GK}
Let $\Om_j$ $(j=1,2,...)$ and $\Om_0$ be bounded strongly pseudoconvex domains in $\CC^n$ with $C^\infty$-smooth boundary. Suppose that $\Om_j$ converges to $\Om_0$ in $C^\infty$-sense, that is, there exists a diffeomorphism $\psi_j$ defined on a neighborhood of $\ov\Om_0$ into $\CC^n$ such that $\psi_j(\Om_0) = \Om_j$ and $\psi_j\rightarrow Id$ in $C^\infty$-sense on $\ov\Om_0$. Then for every sufficiently large $j$, there exists a diffeomorphism $\phi_j : \Om_j\rightarrow \Om_0$ such that $\phi_j\circ \Aut (\Om_j) \circ\phi_j^{-1}$ is a Lie subgroup of $\Aut(\Om_0)$.
\end{thm}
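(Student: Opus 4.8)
The plan is to trade the domains for their boundaries and reduce the statement to an Ebin-type conjugation theorem for the compact CR manifolds $\partial\Omega_j$. By Fefferman's boundary regularity theorem, every biholomorphism between smoothly bounded strongly pseudoconvex domains extends to a diffeomorphism of the closures whose boundary trace is a smooth CR diffeomorphism, and conversely each CR automorphism of the boundary extends to a holomorphic automorphism of the domain. Hence, unless $\Omega_0$ is biholomorphic to the unit ball, the Wong--Rosay theorem forces $\mathrm{Aut}(\Omega_0)$ to be compact and canonically isomorphic to the group $\mathrm{Aut}_{\mathrm{CR}}(\partial\Omega_0)$ of smooth CR automorphisms of the compact boundary, and the same holds for $\Omega_j$ once $j$ is large, by stability of the ball characterization under $C^\infty$-convergence. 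I would set the ball case aside to be handled by hand and concentrate on the case in which all automorphism groups in sight are compact.

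The engine of the proof is a canonical Riemannian metric on each boundary. Using $\psi_j$ I transport the CR structure of $\partial\Omega_j$ to the fixed compact manifold $M:=\partial\Omega_0$; since $\psi_j\to\mathrm{Id}$ in $C^\infty$, this produces a sequence of CR structures on $M$ converging in $C^\infty$ to that of $\partial\Omega_0$. On each I solve the CR Yamabe equation, obtaining a contact form $\theta_j$ of constant Webster scalar curvature. Away from the spherical case this solution is unique up to a positive constant, hence canonical; consequently every CR automorphism pulls $\theta_j$ back to a constant multiple of itself and, the constant being forced to be $1$ on the compact boundary, acts as an exact pseudo-hermitian isometry. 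Building the Webster metric $g_j=d\theta_j(\cdot,J_j\cdot)+\theta_j^2$ then realizes $\mathrm{Aut}_{\mathrm{CR}}(\partial\Omega_j)$ as a closed subgroup of $\mathrm{Isom}(g_j)$, and continuous dependence of the Yamabe solution on the CR data gives $g_j\to g_0$ in $C^\infty$.

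With the convergent metrics $g_j\to g_0$ on the compact manifold $M$ in hand, Ebin's theorem (Theorem~\ref{t;Ebin}) would already conjugate $\mathrm{Isom}(g_j)$ into $\mathrm{Isom}(g_0)$ by diffeomorphisms $\tilde\phi_j$. The difficulty, and the heart of the matter, is that this controls only the full Riemannian isometry group, whereas we must keep the conjugated maps \emph{CR}: I therefore expect to need a pseudo-hermitian refinement of Ebin's argument, producing conjugating diffeomorphisms that preserve the contact distribution and the complex structure $J$ and hence carry $\mathrm{Aut}_{\mathrm{CR}}(\partial\Omega_j)$ into $\mathrm{Aut}_{\mathrm{CR}}(\partial\Omega_0)$. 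The remaining steps are comparatively routine: extend $\tilde\phi_j$ to a diffeomorphism $\phi_j:\overline{\Omega}_j\to\overline{\Omega}_0$ equivariantly, so that conjugation by $\phi_j$ sends each $F\in\mathrm{Aut}(\Omega_j)$ to the holomorphic extension of $\tilde\phi_j\,(F|_{\partial})\,\tilde\phi_j^{-1}$ and thus genuinely into $\mathrm{Aut}(\Omega_0)$, and dispose of the spherical/ball case, where the Yamabe solution is non-unique and the automorphism group non-compact, by separate and more explicit scaling arguments. The principal obstacles are thus twofold and linked: upgrading Ebin's conjugation to the CR category so that CR-ness is not lost, and treating the non-compact ball case outside the Yamabe framework.
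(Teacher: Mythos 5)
Your route is not the one behind the quoted theorem (Greene--Krantz prove it by building a compact Riemannian \emph{metric double} of $\Om_j$ and applying Ebin's theorem inside the domain); instead you are essentially reconstructing this paper's own boundary-CR strategy for Theorem \ref{t;main}. That is a legitimate plan, but as executed it has a genuine gap at its central step. You claim that ``away from the spherical case'' the constant Webster scalar curvature contact form is unique up to a positive constant, hence canonical, so that every CR automorphism becomes a pseudohermitian isometry. That uniqueness statement is false as stated: it is known only when the CR Yamabe invariant is nonpositive (Proposition \ref{p;unique}, i.e.\ Jerison--Lee Theorem 7.1), and non-uniqueness in the positive case is not tied to being spherical (already in the Riemannian model, positive-case Yamabe solutions can be non-unique on non-round manifolds). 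The positive case cannot be avoided here: every boundary sufficiently close to $S^{2n+1}$ has positive CR Yamabe invariant, so your ``canonical metric'' construction collapses precisely on a large part of the class of domains the theorem covers. This is exactly why the paper splits its proof into $Y_k\le 0$ (where your Webster-metric argument is carried out) and $Y_k>0$, where an entirely different mechanism is used: normalized Green functions of $L_{\theta_k}$, local scalar flattening, the expansion estimate of Lemma \ref{l;expand}, and a blow-up analysis (Lemma \ref{l;y_0}) showing that a sequence of automorphisms with $\sup|F_k'|_{\theta_k}\to\infty$ forces the limit pseudohermitian structure on $M\setminus\{y_0\}$ to be flat, hence $(M,J_0)$ spherical, contradicting compactness of $\Aut_{CR}(M,J_0)$. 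Without an argument of this kind your proof does not cover the positive case at all.

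Your second ``principal obstacle'' --- upgrading Ebin's conjugation so that CR-ness is not lost --- is a misdiagnosis, and fixing it removes the difficulty rather than creating one. The correct tool is not the isometry-group statement but Ebin's group-theoretic semicontinuity lemma (Theorem \ref{t;gp}): if compact subgroups $G_k\subset\mathrm{Diff}(M)$ converge in $C^\infty$ to $G_0$, then for large $k$ a diffeomorphism conjugates $G_k$ \emph{into} $G_0$. Applying this directly with $G_k=\Aut_{CR}(M,J_k)$ and $G_0=\Aut_{CR}(M,J_0)$, the conjugated maps lie in $\Aut_{CR}(M,J_0)$ by construction, so no pseudohermitian refinement of Ebin is needed. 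What one must actually prove is the group convergence itself, i.e.\ Proposition \ref{p;compact}: every sequence $F_k\in\Aut_{CR}(M,J_k)$ subconverges in $C^\infty$ to an element of $\Aut_{CR}(M,J_0)$ (the limit is automatically CR for $J_0$ since $J_k\to J_0$). That compactness statement, not structure preservation under conjugation, is the heart of the matter, and it is where the Yamabe analysis (both signs) enters. Finally, your reduction steps at the two ends --- discarding the ball case ``by hand'' and extending the boundary conjugation equivariantly to a diffeomorphism $\ov\Om_j\to\ov\Om_0$ realizing the homomorphism $F\mapsto$ (extension of $\tilde\phi_j(F|_\partial)\tilde\phi_j^{-1}$) as an honest conjugation --- are stated as routine but are each nontrivial; the first is where Wong--Rosay and the original Greene--Krantz argument are still needed, and the second requires an equivariant extension argument you have not supplied.
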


Unlike the isometry group of a compact Riemannian manifold, the holomorphic automorphism group on a bounded strongly pseudoconvex domain can be noncompact, so the proof of Theorem \ref{t;GK} is divided into two cases; Either $\Aut(\Om_0)$ is compact or not. It turned out that the latter case is relatively simple, which is the case of deformations of the unit ball by the Wong-Rosay theorem \cite{Ro, Wo}. The main part of the proof of Theoem \ref{t;GK} is thus devoted to the case when $\Aut(\Om_0)$ is compact.  Greene and Krantz proved this case by constructing a compact Riemannian manifold $(M, g_j)$ which includes $\Om_j$ as a relatively compact subset and whose isometry group contains the automorphism group of $\Om_j$. Then Ebin's theorem yields the conclusion. The Riemannian manifold $(M, g_j)$ is called a {\em metric double} of $\Om_j$.

This idea of proof is applicable to more general cases. One of reasonable ways of generalization is  to prove the semicontinuity property for more general class of domains. In recent paper \cite{GKKS}, the authors generalized Theorem \ref{t;GK} to finitely differentiable cases. In \cite{GKi}, Greene and Kim proved that a partial generalization is also possible even for some classes of non-strongly pseudoconvex domains. 

\medskip

The aim of the present paper is to obtain another generalization of Theorem \ref{t;GK}. According to Hamilton's theorem in \cite{Ha1, Ha2}, defomations of a bounded strongly pseudoconvex domain with $C^\infty$-smooth boundary coincide with deformations of complex structure on a given domain and they give rise to deformations of CR structure of the boundary.  Fefferman's extension theorem \cite{Fe} shows that every holomorphic automorphism on a bounded strongly pseudoconvex domain with $C^\infty$-smooth boundary extends to a diffeomorphism up to the boundary and hence gives rise to a CR automorphism on the boundary. Conversely, a CR automorphism on the boundary extends a holomorphic automorphism on the domain by the Bochner-Hartogs extension theorem. It is also known that the compact-open topology of the automorphism group of the domain coincides with the $C^\infty$-topology of the CR automorphism group of the boundary (cf. \cite{Be}), if the holomorphic automorphism group of the domain is compact. In this observation, it is natural to think of the semicontinuity property for abstract strongly pseudoconevex CR manifolds under deformations of CR structures, as a generalization of Theorem \ref{t;GK}. We prove the following theorem for CR automorphism groups when the limit structure has a compact CR automorphism group.

\begin{thm}\label{t;main}
Let $\{J_k : k=1, 2, \ldots\}$ be a sequence of $C^\infty$-smooth strongly pseudoconvex CR structures on a compact differentiable manifold $M$ of dimension $2n+1$, which converges to a $C^\infty$-smooth strongly pseudoconvex CR structure $J_0$ on $M$ in $C^\infty$-sense. Suppose that  the CR automorphism group $\Aut_{CR}(M, J_0)$ is compact. Then there exists $N>0$ and a diffeomorphism $\varphi_k:M\rightarrow M$ for each $k >N$ such that $\varphi_k \circ \Aut_{CR}(M, J_k) \circ \varphi^{-1}_k$ is a Lie subgroup of $\Aut_{CR}(M, J_0)$. 
\end{thm}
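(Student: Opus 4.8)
The plan is to follow the strategy of the Greene--Krantz proof in the compact-automorphism-group case, replacing the metric double by a canonically associated Riemannian metric built directly on the compact manifold $M$, and then to invoke Ebin's theorem (Theorem~\ref{t;Ebin}). Since $M$ is already compact no doubling is needed, and the entire argument takes place on $M$ itself.

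The first step is to upgrade the hypothesis so that $\Aut_{CR}(M,J_k)$ is compact for all large $k$, not merely for $J_0$. By the CR analogue of the Obata--Lelong-Ferrand theorem (Schoen's theorem), a compact strongly pseudoconvex CR manifold has non-compact CR automorphism group if and only if it is CR-equivalent to the standard sphere $S^{2n+1}$. I would argue that sphericity is a closed condition under $C^\infty$-convergence: vanishing of the Chern--Moser curvature tensor passes to the limit, and on the simply connected $S^{2n+1}$ local sphericity forces global CR-equivalence to the standard structure. Hence if infinitely many $J_k$ were spherical then $J_0$ would be spherical too, contradicting compactness of $\Aut_{CR}(M,J_0)$. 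Discarding finitely many terms, every $J_k$ is then non-spherical and $\Aut_{CR}(M,J_k)$ is compact.

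The heart of the argument is to attach to each non-spherical structure $J$ a canonical pseudohermitian contact form $\theta_J$ that is invariant under $\Aut_{CR}(M,J)$. Here I would use the CR Yamabe equation: within the conformal class determined by $J$ one solves $b_n\,\Delta_b u + R\,u = \lambda\,u^{1+2/n}$ (Jerison--Lee, Gamara--Yacoub) to obtain a unit-volume contact form of constant Tanaka--Webster scalar curvature. Because the Yamabe energy is invariant under CR diffeomorphisms, $\Aut_{CR}(M,J)$ permutes the set of normalized solutions; off the sphere this solution is canonical, so $\theta_J$ is $\Aut_{CR}(M,J)$-invariant. Taking $g_J$ to be the associated Webster metric, every CR automorphism preserves $\theta_J$ and $J$, hence is an isometry of $g_J$, giving $\Aut_{CR}(M,J)=\mathrm{Psh}(M,\theta_J,J)\subseteq\mathrm{Isom}(g_J)$; and I would establish the reverse inclusion, namely that for the non-spherical Webster metric every isometry is pseudohermitian, so that in fact $\Aut_{CR}(M,J)=\mathrm{Isom}(g_J)$.

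Granting this identification, the proof concludes quickly. I would show that $g_{J_k}\to g_{J_0}$ in $C^\infty$, which reduces to continuous dependence of the CR Yamabe solution on the structure and follows from $J_k\to J_0$ together with subelliptic regularity once non-degeneracy of the solution is in hand. Ebin's theorem then yields, for large $k$, a diffeomorphism $\varphi_k$ with $\varphi_k\circ\mathrm{Isom}(g_{J_k})\circ\varphi_k^{-1}$ a Lie subgroup of $\mathrm{Isom}(g_{J_0})$; rewriting both sides through the identification $\Aut_{CR}(M,\cdot)=\mathrm{Isom}(g_{\cdot})$ shows that $\varphi_k\circ\Aut_{CR}(M,J_k)\circ\varphi_k^{-1}$ is a Lie subgroup of $\Aut_{CR}(M,J_0)$, as desired. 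The main obstacles I anticipate are the two intrinsic-geometry inputs: first, proving that the canonical Yamabe contact form is genuinely unique (hence $\Aut_{CR}$-invariant) and depends continuously on $J$ in the non-spherical, positive-curvature regime where uniqueness of the CR Yamabe minimizer is delicate; and second, establishing that the isometry group of the Webster metric coincides with the pseudohermitian, hence CR, automorphism group, which requires a rigidity argument recovering the Reeb field and the complex structure $J$ from $g_J$.
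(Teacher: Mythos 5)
Your framework---attach to each non-spherical structure a canonical, $\Aut_{CR}$-invariant contact form via the CR Yamabe problem, pass to the Webster metric, and quote Ebin---is exactly the paper's strategy in the case of \emph{nonpositive} CR Yamabe invariant, where it works because the Jerison--Lee uniqueness theorem (Proposition \ref{p;unique}, i.e.\ Theorem 7.1 of \cite{JL1}) guarantees that the unit-volume constant-Webster-curvature form is unique, hence automorphism-invariant, and converges with the structures. The genuine gap is the case $Y_k>0$, precisely the point you flag as ``delicate'' but then assume: when the Yamabe invariant is positive there is no uniqueness theorem for Yamabe minimizers or for constant-curvature contact forms off the sphere, and in the Riemannian analogue uniqueness genuinely fails (on $S^1\times S^{n-1}$ with a long circle the minimizers are non-constant along the circle, so their translates form a continuum of distinct normalized minimizers). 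Knowing that $\Aut_{CR}(M,J)$ permutes the set of normalized solutions does not produce an invariant contact form from that set, and the continuous dependence $J\mapsto\theta_J$ is equally unsupported in this regime. Since invariance of $g_{J_k}$, the convergence $g_{J_k}\to g_{J_0}$, and the application of Ebin's theorem all rest on this canonical choice, the proof collapses exactly where the main analytic difficulty of the theorem lies.

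The paper splits the proof by the sign of $Y_k$ and, for $Y_k>0$, abandons canonical metrics altogether: it proves the sequential compactness statement (Proposition \ref{p;compact}) directly by a Schoen-type blow-up argument. Concretely, local scalar flattening in the manner of Fischer-Colbrie--Schoen \cite{FiS, Sc} upgrades $C^0$-convergence of automorphisms to $C^\infty$-convergence (Lemma \ref{l;conv}); then, assuming a sequence $F_k\in\Aut_{CR}(M,J_k)$ with exploding conformal factor, one rescales by Green functions of $L_{\theta_k}$ (which exist because $Y_k>0$), uses the transformation laws \eqref{e;trans_curv} and \eqref{e;trans_tor} to show the limit pseudohermitian structure on $M\setminus\{y_0\}$ has vanishing curvature and torsion, and invokes rigidity of the Heisenberg group (Proposition \ref{p;equi}) to conclude $(M,J_0)$ is the standard sphere, contradicting compactness of $\Aut_{CR}(M,J_0)$. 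Two smaller points: the identity $\Aut_{CR}(M,J)=\mathrm{Isom}(g_J)$ you propose to prove is false as stated (anti-CR diffeomorphisms are Webster isometries) and is also unnecessary---one only needs the inclusion $\Aut_{CR}\subseteq\mathrm{Isom}$ together with the observation that a $C^\infty$-limit of maps preserving $J_k$ preserves $J_0$; and your first step (closedness of sphericity under $C^\infty$-limits) is sound but is obtained in the paper as a by-product of the same blow-up contradiction rather than as a separate Chern--Moser argument.
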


According to Schoen's theorem \cite{Sc}.  $\Aut_{CR}(M, J_0)$ is compact if and only if $(M, J_0)$ is not CR euivalent with the sphere $S^{2n+1}$ with the standard CR structure.  One should notice that this condition is not necessary if $2n+1 \geq 5$. Boutet de Monvel showed in \cite{Bo} that a CR structure on $M$ which is sufficiently close to the standard structure on $S^{2n+1}$ is also embeddable in $\CC^{n+1}$, if $2n+1 \geq 5$ in contrast with $3$-dimensional case (see \cite{BE, Lem, Ni, Ro}). Therefore, if $\Aut_{CR}(M, J_0)$ is noncompact and $2n+1\geq5$, then the situation is reduced to the case of defomations of the unit ball and follows immediately from Theorem \ref{t;GK}. 

The rest of this paper will be devoted to proving Theorem \ref{t;main}. Since we are thinking about abstract CR manifolds, it is needed to develop an intrinsic way of proof of this. Therefore, the main interest of Theorem \ref{t;main} is not only in the generalization  but also in the intrinsic verification of the Greene-Krantz theorem. The main tool of the proof is the solution for the CR Yamabe problem about the construction of pseudohermitian structures with constant Webster scalar curvature, which is intensively studied for instance, in \cite{CCP, Ga, GY, JL1, JL2}. The subellipticity of the CR Yamabe equation turned out quite useful in obtaining estimates of derivatives of CR automorphisms by Schoen in \cite{Sc}.  We make use of various solutions for the CR Yamabe problem - minimal solutions,  local scalar flattening solutions and the blowing-up solutions given by the Green funtions - developed in \cite{FiS, JL1, JL2, Sc} for the proof. 


\section{Strongly pseudoconvex CR manifolds} 
In this section, we summarize fundamental facts on the strongly pseudoconvex CR manifolds and pseudohermitian structures. The summation convention is always assumed.


\subsection{CR and pseudohermitian structures} Let $M$ be a smooth manifold of dimension $2n+1$ for some positive integer $n$. A {\em CR structure} on $M$ is a smooth complex structure $J$  on a subbundle $H$ of the rank $2n$ of the tangent bundle $TM$ which satisfies the integrability condition. More precisely, the restriction of $J$ on a fibre $H_p$ for a point $p\in M$ is an endomorphism $J_p : H_p\rightarrow H_p$ which satisfies $J_p\circ J_p = -Id_{H_p}$, varying smoothly as $p$ varies, and the bundle of $i$-eigenspace $H^{1,0}$ of $J$ in the complexification $\mathbb{C}\otimes H$ satisfies the Frobenius integrability condition 
$$[\Gamma(H^{1,0}), \Gamma(H^{1,0})]\subset \Gamma(H^{1,0}).$$ 
The subbundle $H$ is called the {\em CR distribution} of $J$.  A {\em CR automorphism }on $M$ is a smooth diffeomorphism $F$ from $M$ onto itself such that $F_* H^{1,0} = H^{1,0}$. We denote by $\Aut_{CR}(M)$ the group of all CR automorphisms on $M$. 
  A CR strucutre is said to be {\em strongly pseudoconvex} if its CR distribution $H$ is a contact distribution and for a contact form $\theta$, the {\em Levi form} $\mathcal{L}_\theta$ defined by 
$$\mathcal{L}_\theta (Z, \ov W):= -i\,d\theta (Z, \ov W)$$ 
for $Z, W\in H^{1,0}$ is positive definite. It is known that $C^0$-topology of $\Aut_{CR}(M)$ coincides with $C^\infty$-topology for a compact strongly pseudoconvex CR manifold $M$, if $\Aut_{CR}(M)$ is compact with respect to $C^0$-topology. See \cite{Sc} for the proof. 

We call a fixed contact form for the CR distribution of a strongly pseudoconvex CR structure a {\em pseudohermitian structure}. Let $\{W_\al : \al=1,...,n\}$ be a local frame, that is, $W_\al$'s are sections of $H^{1,0}$ which form a pointwise basis for $H_{1,0}$. We call a collection of $1$-forms $\{\theta^\al\}$ the {\em admissible coframe} of $\{W_\al\}$, if they are sections of $(H^{1,0})^*$ and satisfy 
$$\theta^\al (W_\be) = \delta^\al_\be,\quad \theta^\al(T) =0,$$
where $T$ is the vector field uniquely determined by 
$$\theta(T)=1,\quad T\lrcorner d\theta =0,$$
which is called the {\em characteristic vector field} for $\theta$.
let $g_{\al\bar\be} = \mathcal{L}_\theta (W_\al, W_{\bar\be})$. Then 
$$d\theta = 2i g_{\al\bar\be}\,\theta^\al\w\theta^{\bar\be},$$
where $\{\theta^\al\}$ is the admissible coframe for $\{W_\al\}$. 

\begin{thm}[\cite{We}]\label{t;webster} There exist a local $1$-form $\om = ({\om_\be}^\al)$ and local functions ${A^\al}_\be$ uniquely determined by 
$$d\theta^\al = \theta^\be\w{\om_\be}^\al + {A^\al}_{\bar\be}\,\theta\w\theta^{\bar\be},$$
$$dg_{\al\bar\be} = \om_{\al\bar\be} + \om_{\bar\be\al},\quad A_{\al\be}= A_{\be\al}.$$
\end{thm}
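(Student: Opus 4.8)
The plan is to treat Theorem~\ref{t;webster} as the pseudohermitian analogue of the fundamental lemma of Riemannian geometry: I would exhibit the connection forms ${\om_\be}^\al$ and the torsion ${A^\al}_\be$ as the unique solution of a determined linear system obtained by expanding everything in the coframe $\{\theta, \theta^\al, \theta^{\ov\be}\}$. First I would record the consequence of the Frobenius integrability of $H^{1,0}$: since the ideal generated by $\theta$ and the $\theta^{\ov\be}$ is a differential ideal, conjugating shows $d\theta^\al \equiv 0 \pmod{\theta, \theta^1, \ldots, \theta^n}$, so the expansion of $d\theta^\al$ in the coframe contains no $\theta^{\ov\be}\w\theta^{\ov\ga}$ term. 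Hence I may write $d\theta^\al = \theta^\be \w {\phi_\be}^\al + \theta \w \tau^\al$ for some $1$-forms ${\phi_\be}^\al$ and $\tau^\al$, where ${\phi_\be}^\al$ is determined only up to the gauge ambiguity ${\phi_\be}^\al \mapsto {\phi_\be}^\al + S^\al_{\be\ga}\theta^\ga$ with $S^\al_{\be\ga}$ symmetric in $\be,\ga$, together with the freedom of moving $\theta$-components between ${\phi_\be}^\al$ and $\tau^\al$.

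Next I would introduce undetermined coefficients, writing ${\om_\be}^\al = \Gamma^\al_{\be\ga}\theta^\ga + \Gamma^\al_{\be\ov\ga}\theta^{\ov\ga} + \Gamma^\al_{\be 0}\theta$, and substitute into the two structure equations. Requiring that $d\theta^\al - \theta^\be\w{\om_\be}^\al$ be a pure $\theta\w\theta^{\ov\be}$ expression forces the $\theta^\be\w\theta^\ga$, $\theta^\be\w\theta^{\ov\ga}$ and $\theta\w\theta^\be$ parts to cancel; this expresses the antisymmetric parts of the $\Gamma$'s in terms of the known coefficients of $d\theta^\al$ and leaves the remaining components to be pinned down. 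The residual freedom is exactly what the metric-compatibility equation $dg_{\al\ov\be} = \om_{\al\ov\be} + \om_{\ov\be\al}$ is designed to remove: lowering an index by $g_{\al\ov\be}$ and splitting this equation into its $\theta^\ga$-, $\theta^{\ov\ga}$- and $\theta$-components gives, together with the conjugate-symmetry $g_{\al\ov\be}=\ov{g_{\be\ov\al}}$, a Koszul-type system determining the Hermitian part of the connection.

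The last ingredient is the torsion-symmetry condition $A_{\al\be}=A_{\be\al}$, which I would use to fix the one remaining ambiguity in the $\theta$-direction, namely the interplay between $\Gamma^\al_{\be 0}$ and $\tau^\al$: symmetrizing the coefficient of $\theta\w\theta^\be$ and imposing symmetry gives the final set of linear equations, and I would solve the whole system explicitly to produce ${\om_\be}^\al$ and ${A^\al}_\be$. Both existence and uniqueness then follow at once, since the construction shows the coefficients are forced; running the same computation on the difference of two putative solutions yields a homogeneous system with only the zero solution.

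I expect the main obstacle to be the bookkeeping that certifies the linear system is exactly determined, that is, that the number of independent constraints coming from the structure equation, from metric compatibility (after accounting for the conjugate-symmetry of $g$), and from torsion symmetry matches the number of connection coefficients, with an invertible coefficient matrix. Concretely, the delicate points are (i) verifying that integrability really annihilates the $(0,2)$-type part of $d\theta^\al$ so that the equations are consistent, and (ii) disentangling the $\theta$-direction, where the characteristic vector field contributes both to $\Gamma^\al_{\be 0}$ and to the torsion, and where it is precisely the symmetry $A_{\al\be}=A_{\be\al}$ that makes this part uniquely solvable rather than over- or under-determined.
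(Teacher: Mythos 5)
You should first note that the paper itself gives no proof of Theorem~\ref{t;webster}: it is quoted from Webster's paper \cite{We}, so your proposal can only be compared with the classical argument (Webster's original computation, also reproduced by Lee). Your outline does resemble that argument in its overall shape: the observation that integrability of $H^{1,0}$ kills the $\theta^{\bar\be}\w\theta^{\bar\ga}$ part of $d\theta^\al$ is correct and essential, your description of the gauge ambiguity in a decomposition $d\theta^\al = \theta^\be\w{\phi_\be}^\al + \theta\w\tau^\al$ is accurate, and uniqueness does reduce to a homogeneous linear system (indeed, uniqueness already follows from the first two conditions alone: a difference ${\sigma_\be}^\al$ of two candidate connections is forced by the structure equation to be of the form $s^\al_{\be\ga}\theta^\ga$ with $s^\al_{\be\ga}$ symmetric in $\be,\ga$, and metric compatibility then kills $s$ by type considerations).

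There are, however, two genuine gaps in the existence half. First, you have misassigned the role of $A_{\al\be}=A_{\be\al}$. In the statement, the torsion term is already written as ${A^\al}_{\bar\be}\,\theta\w\theta^{\bar\be}$, i.e.\ it is required to have no $\theta\w\theta^\be$ component; it is this purity requirement, built into the equation, that removes the $\theta$-direction ambiguity you describe: every $\theta\w\theta^\be$ term of $d\theta^\al$ must be absorbed into the $\theta$-component of ${\om_\be}^\al$, and ${A^\al}_{\bar\be}$ is simply the $\theta\w\theta^{\bar\be}$ coefficient. After the structure equation is imposed there is nothing left in the $\theta$-direction for the symmetry condition to fix; moreover $A_{\al\be}$ is built from the conjugated, lowered coefficients $g_{\al\bar\sigma}\,\ov{{A^\sigma}_{\bar\be}}$, not from the $\theta\w\theta^\be$ coefficient you propose to symmetrize, so its symmetry must be \emph{proven}, not imposed as a gauge choice. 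Second, and more seriously, the linear system you hope to show is ``exactly determined'' is in fact overdetermined: the $\theta^\ga$-component of $dg_{\al\bar\be}=\om_{\al\bar\be}+\om_{\bar\be\al}$ determines ${\Gamma^\al}_{\be\ga}$ \emph{completely} (once the mixed coefficients ${\Gamma^\al}_{\be\bar\ga}$ are read off from the structure equation), while the structure equation has independently prescribed its antisymmetric part; likewise the $\theta$-component of metric compatibility is a constraint on coefficients already determined by the structure equation. Existence therefore hinges on consistency identities, and these come from differentiating the Levi-form normalization, $0=d\bigl(2ig_{\al\bar\be}\,\theta^\al\w\theta^{\bar\be}\bigr)$, together with $g_{\al\bar\be}=\ov{g_{\be\bar\al}}$: this is exactly what reconciles the two determinations of $\Gamma$, validates the $\theta$-component equation, and, once the structure equations are substituted back into $d^2\theta=0$, leaves precisely the terms $A_{\al\ga}\,\theta\w\theta^\al\w\theta^\ga$ plus their conjugates, forcing $A_{\al\be}=A_{\be\al}$. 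Your proposal never invokes $d^2\theta=0$, so the invertible coefficient matrix you expect does not exist, and the consistency verification --- the actual content of Webster's theorem --- is missing.
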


Here and the sequel, we lower or raise index by $(g_{\al\bar\be})$ and $(g^{\al\bar\be}) = (g_{\al\bar\be})^{-1}$. A connection $\nabla$ defined by 
$$\nabla W_\al = {\om_\al}^\be\otimes W_\be,\quad \nabla T =0$$
is called the {\em pseudohermitian connection} or the {\em Webster connection} for $\theta$. The functions ${A^\al}_\be$ are called the coefficients of the {\em torsion tensor} $\mathbf{T}$. Let 
$$d{\om_\al}^\be - {\om_\al}^\ga \w {\om_\ga}^\be\equiv {{R_\al}^\be}_{\ga\bar\sigma}\,\theta^\ga\w\theta^{\bar\sigma}\quad \mbox{mod } \theta, \theta^\ga\w\theta^\sigma, \theta^{\bar\ga}\w\theta^{\bar\sigma}.$$
We call ${{R_\al}^\be}_{\ga\bar\sigma}$ the coefficients of the {\em Webster curvature tensor} $\mathbf{R}$. Contracting indices, we obtain the coefficients $R_{\al\bar\be}$ of the Webster Ricci curvature $Ric$ and the Webster scalar curvature $S$ as follows:
$$R_{\al\bar\be} = {{R_\ga}^\ga}_{\al\bar\be},\quad S = R_{\al\bar\be} \,g^{\al\bar\be}.$$

The norm of the Webster curvature $|\mathbf R|_\theta$ is defined by 
$$|\mathbf R|^2_\theta = \sum_{\al, \be, \ga, \sigma} |{{R_\al}^\be}_{\ga\bar\sigma}|^2$$
where the frame is chosen so that $g_{\al\bar\be} = \delta_{\al\bar\be}$. We similarly define the norm of the torsion tensor $|\mathbf{T}|_\theta$.

A pseudohermitian strucutre defines a sub-Riemannian structure. The distance function induced by a sub-Riemannian metric is called the {\em Carnot-Carath\'eodory distance} (cf. \cite{St}). We denote by $B_\theta(x,r)$ the Carnot-Carath\'eodory ball with respect to the pseudohermitian structure $\theta$ of radius $r>0$ centered at $x\in M$.

\medskip

The {\em Heisenberg group} $\mathcal{H}^n$ is a strongly pseudoconvex CR manifold $\mathbb{C}^n\times \mathbb{R}$ with the CR structure whose $H^{1,0}$ bundle is spanned by 
\beg\label{e;st1}
Z_\al = \frac{\partial}{\partial z^\al} + i z^{\bar\al}\,\frac{\partial}{\partial t}, \quad \al =1,...,n,
\eeg
where $(z,t) = (z^1,...,z^n, t)$ is the standard coordinate system of $\CC^n\times\RR$. It is well-known that $\mathcal{H}^n$ is CR equivalent to the sphere in $\CC^{n+1}$ minus a single point. If we put 
\beg\label{e;st2}
\vartheta_0 = dt - iz^{\bar\al} \,dz^\al + iz^\al \,dz^{\bar\al},
\eeg
then it turns out the curvature and torsion tensors vanish identically. The converse also follows from the solution of the Cartan equivalence problem.
\begin{prop}\label{p;equi}
If the curvature and the torsion tensors of a pseudohermitian manifold $(M, \theta)$ vanish identically, then the pseudohermitian structure of $M$ is locally equivalent to that of $(\mathcal{H}^n, \vartheta_0)$. If we assume more that $M$ is simply connected and complete in the sense that every Carnot-Carath\'eodory ball is relatively compact in $M$, then $(M, \theta)$ is globally equivalent to $(\mathcal{H}^n, \vartheta_0)$,
\end{prop}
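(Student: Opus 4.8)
The plan is to read the vanishing of curvature and torsion as flatness of the pseudohermitian structure and then run a Cartan equivalence (developing map) argument against the flat model $(\mathcal{H}^n, \vartheta_0)$. First I would pass to a unitary admissible coframe, that is, choose the local frame $\{W_\al\}$ so that $g_{\al\bar\be} = \delta_{\al\bar\be}$. With $\mathbf{R}\equiv 0$ and $\mathbf{T}\equiv 0$ (so ${A^\al}_\be = 0$), the structure equations of Theorem \ref{t;webster} together with the definition of the Webster curvature collapse to
\begin{align*}
d\theta &= 2i\,\delta_{\al\bar\be}\,\theta^\al\w\theta^{\bar\be}, & d\theta^\al &= \theta^\be\w{\om_\be}^\al, & d{\om_\al}^\be &= {\om_\al}^\ga\w{\om_\ga}^\be,
\end{align*}
while the compatibility $dg_{\al\bar\be} = \om_{\al\bar\be} + \om_{\bar\be\al}$ forces $({\om_\be}^\al)$ to be $\mathfrak{u}(n)$-valued in this frame. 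Let $G$ be the group of pseudohermitian automorphisms of $(\mathcal{H}^n, \vartheta_0)$, namely the semidirect product of the Heisenberg group with $\UG(n)$, with Lie algebra $\g$ of dimension $n^2 + 2n + 1$, and let $\pi:\mathcal{P}\rightarrow M$ be the $\UG(n)$-principal bundle of unitary admissible coframes. On $\mathcal{P}$ the tautological forms $\theta, \theta^\al, \theta^{\bar\al}$ and the connection form $({\om_\be}^\al)$ assemble into a single $\g$-valued $1$-form $\varpi$ that is a pointwise linear isomorphism $T\mathcal{P}\rightarrow\g$.

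The displayed equations are precisely the components of the Maurer-Cartan equation $d\varpi + \frac{1}{2}[\varpi,\varpi] = 0$ for $\g$. By the Cartan-Darboux theorem for $\g$-valued coframings satisfying the Maurer-Cartan equation, every point of $\mathcal{P}$ has a neighborhood admitting a diffeomorphism onto an open set of $G$ that pulls back the Maurer-Cartan form of $G$ to $\varpi$; such a map is automatically $\UG(n)$-equivariant, so it descends to a local pseudohermitian equivalence of a neighborhood in $M$ onto an open set of $G/\UG(n) = \mathcal{H}^n$ carrying $\theta$ to $\vartheta_0$. This establishes the local assertion.

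For the global statement I would assemble these local equivalences into a developing map. Since $G$ acts simply transitively on the unitary coframe bundle of $(\mathcal{H}^n, \vartheta_0)$, any pseudohermitian isomorphism between connected open subsets of $\mathcal{H}^n$ is the restriction of a unique element of $G$ (match one coframe and invoke uniqueness for the parallelism $\varpi$); hence the transition maps of the atlas above lie in $G$, and $M$ carries a $(G, \mathcal{H}^n)$-structure. Because $M$ is simply connected, the monodromy of analytic continuation of a germ of local equivalence is trivial, so the local equivalences patch into a globally defined developing map $\mathrm{dev}:M\rightarrow\mathcal{H}^n$, which is itself a local pseudohermitian equivalence. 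The completeness hypothesis—that Carnot-Carath\'eodory balls are relatively compact—then forces $\mathrm{dev}$ to be a covering map, and since $\mathcal{H}^n\cong\RR^{2n+1}$ is simply connected, $\mathrm{dev}$ is a diffeomorphism and thus a global pseudohermitian equivalence. I expect the covering-map step to be the main obstacle: it requires a path-lifting argument showing that, because $\mathrm{dev}$ is a local Carnot-Carath\'eodory isometry and $M$ is complete, every horizontal path in $\mathcal{H}^n$ lifts through $\mathrm{dev}$ with its lift confined to a relatively compact set over finite length, so that no lift escapes to infinity in finite time. This is the pseudohermitian analogue of the classical fact that a complete local Riemannian isometry is a covering map.
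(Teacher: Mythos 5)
Your proposal is correct and follows essentially the route the paper itself indicates: the paper states this proposition without proof, attributing it to ``the solution of the Cartan equivalence problem,'' and your argument --- the unitary coframe bundle with its $\g$-valued parallelism satisfying the Maurer--Cartan equation, Cartan--Darboux for the local statement, and a developing map plus completeness for the global statement --- is precisely that classical argument. Two glossed steps deserve a remark but are standard: full flatness $d{\om_\al}^\be = {\om_\al}^\ga\w{\om_\ga}^\be$ requires Webster's complete structure equation (the components of the curvature $2$-form other than ${{R_\al}^\be}_{\ga\bar\sigma}\,\theta^\ga\w\theta^{\bar\sigma}$ are covariant derivatives of the torsion, hence vanish here), and the Cartan--Darboux map is equivariant only up to left translation, though it still descends to a map on $M$ because it carries the vertical foliation of $\mathcal{P}$ into the coset foliation of $G$.
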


For a given pseudohermitian manifold $(M,\theta)$, we can extend the CR structure $J$ to a smooth section of endomorphism $\hat J$ on $TM$ by putting $\hat J(T) =0$, where $T$ is the characteristic vector field of $\theta$. Let $J_k$, $k=1,2,\ldots$ and $J_0$ be strongly pseudoconvex CR structures on $M$ with CR distributions $H_k$ and $H_0$, respectively. We say that $J_k$ conevrges to $J_0$ in $C^l$-sense ($l=0,1,2,...,\infty$), if there exists pseudohermitian structure $\theta_k$ and $\theta_0$ for $(M, J_k)$ and $(M, J_0)$ such that $\theta_k \rightarrow \theta_0$ and $\hat J_k \rightarrow \hat J_0$ in $C^l$-sense as tensors on $M$.


\subsection{Pseudoconformal change of structures and the CR Yamabe equation}
Let $(M, \theta)$ be a $(2n+1)$-dimensional pseudohermitian manifold and let $\tilde \theta = e^{2f} \theta$ be a pseudoconformal change, where $f$ is a smooth real-valued function. Let $\{\theta^\al\}$ be an admissible coframe for $\theta$ satisfying $d\theta = 2i g_{\al\bar\be}\,\theta^\al\w\theta^{\bar\be}$. Then it turns out 
$$\tilde\theta^\al = e^f (\theta^\al + if^\al \,\theta),\quad \al=1,...n$$
form an admissible coframe for $\tilde\theta$ which satisfies 
$$d\tilde\theta = 2i g_{\al\bar\be}\,\tilde\theta^\al\w\tilde\theta^{\bar\be}.$$
Let ${{R_\al}^\be}_{\ga\bar\sigma}$ and $\wt {{R_\al}^\be}_{\ga\bar\sigma}$ be coefficients of the Webster curvatures for $\theta$ and $\tilde\theta$ evaluated in the coframes $\{\theta^\al\}$ and $\{\tilde\theta^\al\}$, respectively. Then they are related as 
\begin{eqnarray}\label{e;trans_curv}
\wt {{R_\al}^\be}_{\ga\bar\sigma} &=& e^{-2f} \left\{ {{R_\al}^\be}_{\ga\bar\sigma} - {\delta_\al}^\be \left(f_{\ga\bar\sigma} + f_{\bar\sigma\ga}\right)  - 2g_{\al\bar\sigma}\,{f^\be}_\ga -2 f_{\al\bar\sigma}{\delta^\be}_\ga \right.\\
& &\left. - \left({f^\be}_\al + {f_\al}^\be\right)g_{\ga\bar\sigma} - 4\left({\delta_\al}^\be\, g_{\ga\bar\sigma} + g_{\al\bar\sigma}\,{\delta^\be}_\ga \right)f^\la f_\la \right\}\nonumber
\end{eqnarray}
where $f_{\al\bar\be}$, ${f_\al}^\be$ and ${f^\be}_\al$ are components of the second covariant derivatives of $f$ of the pseudohermitian manifold $(M,\theta)$ (cf. Proposition 4.14 in \cite{Jo-Le} for more general case). Contracting indices, we obtain the following transformation formula for Webster the scalar curvatures:
\beg\label{e;trans_scal}
\wt S = e^{-2f} \left\{S + 2(n+1) \Delta_\theta f - 4n(n+1) f^\la f_\la \right\}
\eeg
where $\Delta_\theta f = -\left({f_\al}^\al + {f_{\bar\al}}^{\bar\al} \right)$. The operator $\Delta_\theta$ is called the {\em sublaplacian} for $\theta$.

Let $u$ be a positive smooth function on $M$ defined by $u^{p-2} = e^{2f}$, where $p=2+2/n$. Then \eqref{e;trans_scal} changes into the following nonlinear equation for $u$:
\beg\label{e;yamabe}
L_\theta u: = \left(b_n \Delta_\theta  + S \right)u = \wt S\, u^{p-1},
\eeg
where $b_n = 2+2/n$ (see \cite{JL1, JL2, Le}). Equation \eqref{e;yamabe} is called the {\em CR Yamabe equation} and the subelliptic linear operator $L_\theta$ is called the {\em CR Laplacan} for $\theta$. The {\em CR Yamabe problem} is to find a positive smooth function $u$ which makes $\wt S$ constant. 

Let ${A^\al}_{\bar\be}$ and $\wt{A^\al}_{\bar\be}$ be the coefficients of the torsion tensors for $\theta$ and $\tilde\theta$ in the coframes $\{\theta^\al\}$ and $\{\tilde\theta^\al\}$, respectively. Then in turns out that 
\beg\label{e;trans_tor}
\wt{A^\al}_{\bar\be} = e^{-2f} \left({A^\al}_{\bar\be} - i{f^\al}_{\bar\be} + 2i f^\al f_{\bar\be}  \right).
\eeg 
See \cite{Le}. for details.


\subsection{Folland-Stein spaces and subelliptic estimates} Roughly speaking, a {\em normal coordinate system} of a pseudohermitian manifold $(M, \theta)$ of dimension $(2n+1)$  is a local appoximation by the standard pseudohermitian structure on the Heisenberg group $(\mathcal{H}^n, \theta_0)$. For $p\in M$, let $W_1,...,W_n$ be a local frame defined on a neighborhood $V$ of $p$ such that the coefficients of the Levi form for $\theta$ is $g_{\al\bar\be} = \delta_{\al\bar\be}$. Such a frame is called a {\em unitary frame}. We denote by $T$ the characteristic vector field for $\theta$. Let $(z, t)$ be the standard coordinates of $\mathcal{H}^n$ and let $|(z,t)| = (|z|^4 + t^2)^{1/4}$ the Heisenberg group norm. $Z_\al$ and $\theta_0$ are defined on $\mathcal{H}^n$ as \eqref{e;st1} and \eqref{e;st2}. Then 

\begin{thm}[\cite{FS}]\label{t;FS}
There is a neighborhood of the diagonal $\Omega \subset V \times V$ and a $C^\infty$-smooth
mapping $\Theta : \Omega \rightarrow \mathcal{H}^n$ satisfying:

\begin{itemize}
\item[(a)] $\Theta (\xi, \eta) = -\Theta(\eta, \xi) = \Theta (\eta, \xi )^{-1}$ . (In particular, $\Theta(\xi, \xi) = 0$.)

\item[(b)] Let $\Theta_\xi(\eta) = \Theta (\xi, \eta)$. Then $\Theta_\xi$ is a diffeomorphism of a neighborhood $\Omega_\xi$ of $\xi$ onto a neighborhood of the origin in $\mathcal{H}^n$. Denote by $y = (z, t) = \Theta(\xi, \eta)$
the coordinates of $\mathcal{H}^n$. Denote by $O^k$ ($k=1,2,\ldots$) a $C^\infty$ function $f$ of $\xi$ and $y$
such that for each compact set $K \subset V$, there is a constant $C_K$, with $f(\xi,y) \leq C_K |y|^k$ (Heisenberg norm) for $\xi\in K$. Then we have the following approximation formula. 
$$(\Theta_\xi^{-1})^* \theta = \theta_0 + O^1 d t+ \sum_{\al=1}^n (O^2 dz^\al + O^2dz^{\bar\al}), \quad (\Theta_\xi^{-1})^* (\theta\w d\theta^n) = (1 + O^1) \theta_0\w d\theta_0^n,$$
$$\Theta_{\xi*} W_\al = Z_\al + O^1 \mathcal{E}(\partial_z) + O^2 \mathcal{E}(\partial_t), \quad \Theta_{\xi*} T = \partial/\partial t + O^1 \mathcal{E}(\partial_z, \partial_t),$$
$$\Theta_{\xi*} \Delta_\theta = \Delta_{\theta_0}+ \mathcal{E}(\partial_z) + O^1 \mathcal{E}(\partial_t, \partial_z^2) + O^2\mathcal{E}(\partial_z\partial_t) + O^3\mathcal{E}(\partial_t^2).$$
\end{itemize}
Here $O^k \mathcal{E}$ indicates an operator involving linear combinations of the indicated
derivatives with smooth coefficients in $O^k$, and we have used $\partial_z$ to denote any of the
derivatives $\partial/\partial z^\al$, $\partial/\partial z^{\bar\al}$.
\end{thm}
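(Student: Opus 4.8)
The plan is to construct the two-point map $\Theta$ from an exponential-type map adapted to the parabolic dilations $\de_s(z,t)=(sz,s^2t)$ of $\mathcal{H}^n$, after using the given unitary frame to identify each tangent space with the Heisenberg data. First I would fix, at every point $\xi$ near $p$, the linear isomorphism $T_\xi M\to\mathcal{H}^n$ sending $\RE W_\al,\IM W_\al$ to the coordinate directions of $z^\al$ and $T$ to $\partial/\partial t$. Since the frame is unitary, this carries the Levi form to the standard Hermitian form and the contact bracket $[W_\al,W_{\bar\be}]\equiv -2ig_{\al\bar\be}T$ modulo $H$ to the Heisenberg commutation relation $[Z_\al,Z_{\bar\be}]=-2i\de_{\al\bar\be}\,\partial/\partial t$; thus the first-order model at $\xi$ is exactly $(\mathcal{H}^n,\theta_0)$. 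Composing the inverse of the Webster-connection exponential with this identification yields a first candidate $\Theta_\xi$, a diffeomorphism of a neighborhood $\Om_\xi$ of $\xi$ onto a neighborhood of the origin, which already secures the diffeomorphism assertion of part~(b).

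The delicate requirement is the antisymmetry~(a), $\Theta(\xi,\eta)=-\Theta(\eta,\xi)=\Theta(\eta,\xi)^{-1}$. I would obtain it by defining $\Theta$ through a symmetric two-point recipe rather than a one-sided exponential: assign to the pair $(\xi,\eta)$ the arrival data of the dilation-normalized curve joining them, read in the frame transported from $\xi$. Interchanging the two points reverses this curve and negates its initial data, which on the Heisenberg side is precisely the group inversion $(z,t)\mapsto(-z,-t)$; since inversion in $\mathcal{H}^n$ coincides with coordinate negation, both equalities in~(a) follow simultaneously. The subtle point here is that the Webster connection carries torsion $\mathbf{T}$, so a naive geodesic reversal is not exactly antisymmetric; one must correct the construction so that the frame carried from $\xi$ agrees with the frame at $\eta$ through the needed order, using $\nabla T=0$ and unitarity.

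With $\Theta$ constructed, part~(b) reduces to a Taylor expansion in the coordinates $y=(z,t)$, graded by the Heisenberg weight in which $z$ counts with weight $1$ and $t$ with weight $2$, so that a quantity is $O^k$ exactly when it vanishes to weighted order $k$. The normalization of the coordinates makes the first derivatives of $g_{\al\bar\be}$ and the connection forms ${\om_\be}^\al$ vanish at the center, so the structure equation $d\theta^\al=\theta^\be\w{\om_\be}^\al+{A^\al}_{\bar\be}\,\theta\w\theta^{\bar\be}$ of Theorem~\ref{t;webster} reduces to the flat Heisenberg identity through the relevant order, and $(\Theta_\xi^{-1})^*\theta$ and $\Theta_{\xi*}W_\al$ match $\theta_0$ and $Z_\al$ at leading order. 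The split between the remainders — $O^1\,dt$ but $O^2\,dz^\al$ in the expansion of $\theta$, and the analogous splits for $W_\al$ and $T$ — is forced by the anisotropy: a $t$-derivative costs two weight units, so a coefficient of $dt$ need only vanish to first order to match the error of a coefficient of $dz^\al$ vanishing to second order. The volume identity $(\Theta_\xi^{-1})^*(\theta\w d\theta^n)=(1+O^1)\theta_0\w d\theta_0^n$ then follows by taking the top exterior power.

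The main obstacle is the expansion of the sublaplacian,
\[
\Theta_{\xi*}\Delta_\theta=\Delta_{\theta_0}+\mathcal{E}(\partial_z)+O^1\mathcal{E}(\partial_t,\partial_z^2)+O^2\mathcal{E}(\partial_z\partial_t)+O^3\mathcal{E}(\partial_t^2),
\]
because $\Delta_\theta=-({f_\al}^\al+{f_{\bar\al}}^{\bar\al})$ is of second order and each of its coefficients must be expanded while keeping track of how the two derivatives and the coefficient scale jointly under $\de_s$. Here one combines the graded expansions of $W_\al$, $T$ and of $g_{\al\bar\be}$ and checks that every error lands in exactly one of the four indicated weight classes; the cross term $O^2\mathcal{E}(\partial_z\partial_t)$ and the pure term $O^3\mathcal{E}(\partial_t^2)$ are the most delicate, and are also where compatibility with the antisymmetric normalization used for~(a) must be verified once more. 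Once the graded expansions of the first-order data are in place, the remaining assertions are routine.
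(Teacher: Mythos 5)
The paper offers no proof of this statement at all: it is quoted directly from Folland--Stein \cite{FS}, so the only fair comparison is with the construction in that cited source. Measured against it, your proposal has a genuine gap exactly at the one delicate point, the exact identity (a). In Folland--Stein, $\Theta$ is defined by canonical coordinates of the first kind attached to the frame: $\Theta(\xi,\eta)=(z,t)$ is the unique small parameter for which $\eta$ is reached from $\xi$ by the time-one flow of the fixed vector field $\sum_{j}y^{j}X_{j}+tT$ (with $z^\al=y^\al+iy^{\al+n}$). With this definition antisymmetry is exact and automatic: reversing a flow line of $V$ is the same as flowing along $-V$, so $\exp(V)\xi=\eta$ if and only if $\exp(-V)\eta=\xi$, and since inversion in $\mathcal{H}^n$ is $(z,t)\mapsto(-z,-t)$, both equalities in (a) hold identically. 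You instead start from the exponential map of the Webster connection and then try to restore antisymmetry through a ``symmetric two-point recipe'' built on a ``dilation-normalized curve joining'' $\xi$ and $\eta$. That object is not well defined (there is no canonical curve joining two nearby points of a CR manifold: Carnot--Carath\'eodory geodesics are neither unique nor smooth in general, and geodesics of a connection with torsion do not reverse symmetrically), and, more fundamentally, your proposed repair only arranges agreement ``through the needed order.'' Property (a) is an exact algebraic identity, not an asymptotic one; a correction performed modulo high-order errors can never yield $\Theta(\xi,\eta)=-\Theta(\eta,\xi)$ on the nose. This is precisely the step on which your construction fails, and it is avoided, rather than solved, by the flow definition above.

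Your outline for (b) is right in spirit, namely a Taylor expansion graded by the parabolic weights (weight $1$ for $z$, weight $2$ for $t$), and your explanation of why the error splits as $O^1\,dt$ versus $O^2\,dz^\al$ is correct. But two points need repair. First, in a unitary frame $g_{\al\bar\be}=\de_{\al\bar\be}$ identically, so nothing about ``first derivatives of $g_{\al\bar\be}$ vanishing at the center'' is at issue; what actually produces the Heisenberg model at leading order is the commutation relation $[W_\al,W_{\bar\be}]\equiv-2i\de_{\al\bar\be}T$ modulo $H$, fed through a Campbell--Hausdorff (equivalently, an ODE) expansion of the flow coordinates --- this is the technical engine of the Folland--Stein proof, and your sketch never names it. Second, once the graded expansions of $W_\al$ and $T$ are established, the sublaplacian expansion follows mechanically by composing them; it is not an additional delicate verification tied to the antisymmetric normalization, as your last paragraph suggests.
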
 
The smooth map $\Theta_\xi$ is called the {\em Folland-Stein normal coordinates} centered at $\xi$ with respect to the frame $\{W_\al\}$. (This coordinate system depends on the choice of local unitary frame. Another construction of pseudohermitian normal coordinates which does not depend on local frames is given in \cite{JL2}.) Here and in the sequel, we use the term {\em frame constants} to mean bounds on
finitely many derivatives of the coefficients in the $O^k\mathcal{E}$ terms in Theorem \ref{t;FS}.

Let $V$ be an open neighborhood of a point $p\in M$ with a fixed local unitary frame $W_1,...,W_n$ and let $U$ be a relatively compact open neighborhood of $p$ in $V$ such that $\Omega_\xi$ in Theorem \ref{t;FS} contains $\ov U$ for every $\xi\in \ov U$.  Let $X_\al = \RE W_\al$ and $X_{\al+n} = \IM W_\al$ for $\al=1,...,n$. For a multiindex $A = (\al_1,...,\al_k)$, with $1\leq \al_j\leq 2n$, $j=1,...,k$, we denote $k$ by $\ell(A)$ and denote
$X^A f = X_{\al_1}\cdots X_{\al_k} f$ for a smooth function $f$ on $U$. The $S^p_k(U)$-norm of a smooth function $f$ on $U$ is 
$$\|f\|_{S^p_k(U)} = \sup_{\ell(A)\leq k} \|X^Af\|_{L^p(U)},$$ 
where $\|g\|_{L^p(U)} = \left(\int_U |g|^p\,\theta\w d\theta^n \right)^{1/p}$ is the $L^p$-norm of $g$ on $U$ with respect to the volume element induced by $\theta$. The completion of $C^\infty_0(U)$ with respect to $\|\cdot\|_{S^p_k(U)}$ is denoted by $S^p_k(U)$.

H\"older type spaces suited to $\Delta_\theta$ is also defined as follows. For $x, y\in U$, let $\rho(x,y) = |\Theta(x,y)|$ (Heisenberg norm). For a positive real number $0<s<1$, 
$$\Gamma_s (U) = \{f\in C^0 (\ov U) : |f(x)-f(y)| \leq C \rho(x,y)^s \mbox{ for some constant }C>0\}.$$
If $s$ is a positive non-integral real number such that $k<s<k+1$ for some integer $k\geq 1$, then 
$$\Gamma_s(U) = \{f\in C^0(\ov U) : X^A f \in \Gamma_{s-k} (U), \,\, \ell(A)\leq k\}.$$
Then $\Gamma_s(U)$-norm for $f\in \Gamma_s(U)$ is defined by 
$$\|f\|_{\Gamma_s(U)} = \sup_{x\in U} |f(x)| + \sup\left\{\frac{|X^A f(x)-X^A f(y)|}{\rho(x,y)^{s-k}} : x, y\in U,\,\,x\neq y,\,\, \ell(A) \leq k\right\}.$$
The function spaces $S^p_k(U)$ and $\Gamma_s(U)$ are called the {\em Folland-Stein spaces} on $U$. We denote by $\Lambda_s(U)$ the Euclidean H\'older space when we regard $U$ as a subset of $\RR^{2n+1}$.

\begin{thm}[\cite{FS}]\label{t;subelliptic} For each positive real number $s$ which is not an integer, each $1<r<\infty$ and each integer $k\geq 1$, there exists a constant $C>$ such that for every $f\in C^\infty_0(U)$, 
\begin{itemize}
\item[(a)] $\|f\|_{\Gamma_s(U)} \leq C \|f\|_{S^r_k(U)}$, where $1/r = (k-s)/(2n+2)$,
\item[(b)] $\|f\|_{\Lambda_{s/2}(U)} \leq C \|f\|_{\Gamma_s(U)}$,
\item[(c)] $\|f\|_{S^r_2(U)} \leq C (\|\Delta_\theta f\|_{L^r(U)} + \|f\|_{L^r(U)})$,
\item[(d)] $\|f\|_{\Gamma_{s+2}(U)} \leq C (\|\Delta_\theta f\|_{\Gamma_s(U)} + \|f\|_{\Gamma_s(U)})$.
\end{itemize} 
Moreover the constant $C$ depends only on frame constants.
\end{thm}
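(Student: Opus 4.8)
The plan is to reduce all four estimates to the translation- and dilation-invariant model on the Heisenberg group $\mathcal{H}^n$ and then transfer them to $(M,\theta)$ through the normal-coordinate approximation of Theorem \ref{t;FS}. The key structural fact is that $\mathcal{H}^n$ carries the family of parabolic dilations $\delta_\la(z,t)=(\la z,\la^2 t)$ under which $\theta_0$ scales homogeneously; the associated homogeneous dimension is $Q=2n+2$, and $\Delta_{\theta_0}$ is homogeneous of degree $2$. By Folland's explicit computation the gauge $|(z,t)|=(|z|^4+t^2)^{1/4}$ furnishes a fundamental solution $\Phi=c_n|(z,t)|^{-(Q-2)}$ of $\Delta_{\theta_0}$, so that $\Delta_{\theta_0}^{-1}$ is realized as convolution with a kernel homogeneous of degree $-(Q-2)$ and smooth away from the origin.

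With this fundamental solution in hand, parts (a) and (c) on the model become instances of harmonic analysis on homogeneous groups. For (a) I would use a representation formula expressing $f$ through its horizontal derivatives of order $k$ as a convolution with a kernel homogeneous of degree $-(Q-k)$, and invoke the Hardy--Littlewood--Sobolev inequality adapted to the gauge: such a convolution maps $L^r$ into the anisotropic H\"older class $\Gamma_s$ exactly when $1/r=(k-s)/Q$, which is the stated relation since $Q=2n+2$. For (c), the second-order operators $X_\al X_\be \Delta_{\theta_0}^{-1}$ are convolutions with kernels homogeneous of degree $-Q$ satisfying the cancellation condition, hence Calder\'on--Zygmund singular integrals in the sense of Knapp--Stein and Kor\'anyi--V\'agi and bounded on $L^r$ for every $1<r<\infty$; this controls the top-order derivatives $\|X_\al X_\be f\|_{L^r}$ by $\|\Delta_{\theta_0}f\|_{L^r}$, while the lower-order terms are absorbed into $\|f\|_{L^r}$ by the analogous fractional-integration bounds. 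Part (d) is the matching Schauder statement: the same order-zero singular integrals preserve each $\Gamma_s$ for non-integral $s$, which follows from the standard kernel estimates together with the cancellation property.

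The inequality (b) is purely geometric. Because the gauge metric $\rho$ is comparable to the Euclidean distance in the horizontal $z$-directions but only to its square root in the missing $t$-direction, one has $d_E\sim\rho^2$ along $t$, so that $|f(x)-f(y)|\le C\rho^s\sim C\,d_E^{s/2}$; thus a function H\"older of exponent $s$ with respect to $\rho$ is H\"older of exponent $s/2$ with respect to the Euclidean distance on $U\subset\RR^{2n+1}$, giving $\Gamma_s(U)\hookrightarrow\Lambda_{s/2}(U)$, the factor $1/2$ being forced by the $t$-direction.

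Finally, to pass from $\mathcal{H}^n$ to a general $(M,\theta)$ I would freeze coefficients. Theorem \ref{t;FS} shows that in the Folland--Stein coordinates centered at any $\xi\in \ov U$ one has $\Delta_\theta=\Delta_{\theta_0}+R_\xi$, where $R_\xi$ collects the $O^k\mathcal{E}$ remainder operators and is of strictly lower homogeneous order, with coefficients controlled by the frame constants. Building the parametrix from $\Delta_{\theta_0}^{-1}$ and patching over a finite cover of $U$ by small gauge balls, the contribution of $R_\xi$ is absorbed by taking the balls small enough (quantitatively, in terms of the frame constants) and then summing the local model estimates; commutators between the $X_\al$ and the parametrix are again lower order and feed only into the $\|f\|_{L^r}$ and $\|f\|_{\Gamma_s}$ terms on the right. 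I expect the main obstacle to be precisely this last step: one must verify that the error operators $R_\xi$ and all the commutators genuinely gain homogeneity, so that the Calder\'on--Zygmund and Schauder bounds for the model survive the perturbation, and that every constant appearing depends only on finitely many derivatives of the structure, i.e.\ on the frame constants, uniformly in $\xi$.
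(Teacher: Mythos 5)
This statement is quoted verbatim from Folland--Stein \cite{FS}; the paper supplies no proof of its own, and your sketch reconstructs exactly the argument of that source (explicit fundamental solution $c_n|(z,t)|^{-(Q-2)}$ on $\mathcal{H}^n$ with $Q=2n+2$, fractional integration for (a), Kor\'anyi--V\'agi/Knapp--Stein singular integrals for (c) and (d), and the frozen-coefficient parametrix with lower-order remainders controlled by frame constants for the transfer to $(M,\theta)$), so it is the same approach as the paper's. One caveat: your ``purely geometric'' argument for (b) only proves the case $s\leq 1$, since for non-integral $s>1$ the spaces $\Gamma_s$ and $\Lambda_{s/2}$ are defined through derivatives and the displacement in the $t$-direction must be handled by the Taylor-expansion/closed-horizontal-loop argument of \cite{FS} (oscillation of $X_\alpha f$ around a loop of side $\delta^{1/2}$ gives the exponent $s/2$), which is part of the same standard machinery rather than a mere comparison of the gauge with the Euclidean distance.
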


One should notice that the constants $C$ in the theorem above depend on frame constants rarther than the pseudohermitian structure itself. Therefore, if $\mathcal U$ is a small neighborhood $(J_0, \theta_0)$ in $C^\infty$-topology, then we can choose constants $C$ in Theorem \ref{t;subelliptic} which are independent of choice $(J, \theta) \in \mathcal U$. This fact will be same for all theorems in this paper about some norm estimates with constants depending on frame constants.

If $M$ is compact, we can choose a finite open covering $U_1,...,U_m$ each of which is contained in a normal coordinates. Let $\phi_1,...,\phi_m$ be a partition of unity subordinate to this covering. Then the spaces of $S^p_k(M)$ and $\Gamma_s(M)$ are defined as spaces of function $u$ such that $\phi_ju \in S^p_k(U_j)$ or $\phi_j u \in \Gamma_s(U_j)$, respectively, for every $j=1,...,m$.


\section{Proof of Theorem \ref{t;main}}

The proof of Theorem \ref{t;main} is based on the following fundamental fact about the semicontinuity property of compact group actions proved by Ebin in \cite{Eb} for Theorem \ref{t;Ebin}. We denote by $\mathrm{Diff}(M)$ the group of $C^\infty$-smooth diffeomorphisms. Recall that the $C^\infty$-topology on $\mathrm{Diff}(M)$ is metrizable. We denote a metric inducing the $C^\infty$-topology by $d$.

\begin{thm}[\cite{Eb}; cf. \cite{GKK, GKKS, GrKa, Kim}]\label{t;gp}
Let $M$ be a compact $C^\infty$-smooth manifold and let $G_k$ $(k=1,2,\ldots)$ and $G_0$ be compact subgroups of $\mathrm{Diff}(M)$. Suppose $G_j\rightarrow G_0$ in $C^\infty$-topology as $j\rightarrow \infty$, that is, for every $\epsilon>0$, there exists an integer $N$ such that $d(f, G_0):=\inf_{g\in G_0} d(f,g) <\epsilon$ for every $f\in G_j$, whenever $j>N$. Then $G_j$ is isomorphic to a subgroup of $G_0$ for every sufficiently large $j$. Moreover, the isomorphism can be obtained by the conjugation by a diffeomorphism $\varphi_j$ of $M$ which converges to the identity map in $C^\infty$-sense.
\end{thm}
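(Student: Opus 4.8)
The plan is to move the problem out of the infinite-dimensional group $\mathrm{Diff}(M)$ into a single fixed compact Lie group, and then to perform two conjugations, the second of which is an elementary finite-dimensional fact. First I would produce a rigid ambient group for $G_0$. Since $G_0$ is a compact group acting by diffeomorphisms on the compact manifold $M$, it is a Lie group (Bochner--Montgomery), and averaging any background Riemannian metric over $G_0$ against its Haar measure yields a $G_0$-invariant metric $g_0$. By the theorem of Myers and Steenrod, $I_0 := \mathrm{Isom}(M, g_0)$ is then a compact Lie group containing $G_0$ as a closed subgroup. The exponential map of $g_0$ realizes $I_0$ as a smooth submanifold of $\mathrm{Diff}(M)$ carrying a tubular neighborhood of uniformly bounded geometry, which is the geometric input for what follows.

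Next I would conjugate $G_j$ into $I_0$. The hypothesis $G_j \to G_0 \subseteq I_0$ guarantees that for all large $j$ every element of $G_j$ lies in an arbitrarily small $C^\infty$-neighborhood of $I_0$. Using the invariant metric $g_0$, its convex normal balls, and integration over the Haar measure of $G_j$, I would apply the Grove--Karcher center-of-mass procedure to construct a diffeomorphism $\psi_j : M \to M$, converging to the identity in $C^\infty$, such that $\psi_j\, G_j\, \psi_j^{-1} \subseteq I_0$. The role of the Haar average over $G_j$ is precisely to force the center-of-mass map to be equivariant, so that conjugation by $\psi_j$ carries the whole group, and not merely individual elements, into the isometries of $g_0$.

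At this stage $G_j' := \psi_j\, G_j\, \psi_j^{-1}$ is a genuine compact Lie subgroup of the finite-dimensional compact Lie group $I_0$, and since $\psi_j \to \mathrm{id}$ and $G_j \to G_0$ it lies in an arbitrarily small neighborhood of $G_0$. I would then invoke the Montgomery--Zippin rigidity of nearby compact subgroups: inside $I_0$, a compact subgroup sufficiently close to $G_0$ is conjugate to a subgroup of $G_0$ by an element $\eta_j \in I_0$ close to the identity. Setting $\varphi_j := \eta_j \circ \psi_j$, which converges to the identity in $C^\infty$, I obtain $\varphi_j\, G_j\, \varphi_j^{-1} \subseteq G_0$, and conjugation by $\varphi_j$ is the required isomorphism of $G_j$ onto a subgroup of $G_0$.

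I expect the main obstacle to be the center-of-mass step of the second paragraph. One must arrange simultaneously that the convexity radius of $g_0$ is used uniformly in $j$ so that $\psi_j$ is globally defined and smooth on all of $M$, that the Haar averaging yields a map that is \emph{exactly} equivariant rather than approximately so, and that the construction returns a homomorphic conjugation of the abstract compact group $G_j$ into $I_0$; tracking the $C^\infty$-convergence $\psi_j \to \mathrm{id}$ through every estimate is where the analytic work concentrates. By contrast, the concluding Montgomery--Zippin step is a standard statement about compact subgroups of a fixed compact Lie group.
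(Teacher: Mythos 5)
This theorem is one the paper \emph{quotes} from the literature (\cite{Eb}; cf.\ \cite{GrKa, Kim, GKKS}) and does not prove, so your attempt can only be compared with the arguments in those references. Your overall architecture --- embed $G_0$ in the compact Lie group $I_0=\mathrm{Isom}(M,g_0)$ of a Haar-averaged invariant metric, conjugate $G_j$ into $I_0$ by a diffeomorphism converging to the identity, then finish inside the fixed compact Lie group $I_0$ with the Montgomery--Zippin rigidity of nearby compact subgroups --- is essentially the architecture of those proofs, and your first and third steps are sound.

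The gap is in the middle step, and it is conceptual, not technical. The Grove--Karcher center-of-mass construction conjugates two $C^1$-close actions of the \emph{same} compact group: to define $\psi_j(x)$ one integrates, over $f\in G_j$, points of the form $\rho(f)^{-1}(f(x))$, where $\rho(f)\in I_0$ is the isometry against which $f$ is compared, and the equivariance identity $\psi_j\circ f=\rho(f)\circ\psi_j$ (which is exactly what carries the whole group, rather than individual elements, into $I_0$) holds only if $f\mapsto\rho(f)$ is a \emph{homomorphism} $G_j\to I_0$ uniformly close to the inclusion. Your hypothesis gives only that each $f\in G_j$ is individually close to the \emph{set} $G_0$; a nearest-point assignment $f\mapsto\rho(f)$ need not be multiplicative, and producing a genuine nearby homomorphism $G_j\to I_0$ is essentially the statement being proved, so the argument is circular at this point. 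The standard repair uses the Haar measure of $G_j$ to average the metric rather than the map: set $\tilde g_j:=\int_{G_j}f^*g_0\,d\mu_j(f)$. Since $\sup_{f\in G_j}\|f^*g_0-g_0\|_{C^k}\to 0$ for every $k$ (by compactness of $G_0$ and the one-sided convergence hypothesis), $\tilde g_j$ is a smooth $G_j$-invariant metric with $\tilde g_j\to g_0$ in $C^\infty$; then Ebin's slice theorem --- that is, Theorem \ref{t;Ebin} itself, together with the information from its proof that the conjugating diffeomorphisms converge to the identity --- yields $\psi_j\to\mathrm{id}$ with $\psi_j\,\mathrm{Isom}(M,\tilde g_j)\,\psi_j^{-1}\subseteq I_0$, hence $\psi_j G_j\psi_j^{-1}\subseteq I_0$. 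After that, your Montgomery--Zippin step goes through verbatim. In short, the infinite-dimensional analytic core of this theorem is Ebin's slice theorem for the action of $\mathrm{Diff}(M)$ on the space of metrics; the center-of-mass trick cannot replace it.
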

Therefore, it suffices to prove the following proposition for the conclusion of Theorem \ref{t;main}.

\begin{prop}\label{p;compact}
Let $\{J_k: k=1.2.\ldots\}$ be a sequence of strongly pseudoconvex CR structures on a compact manifold $M$ which tends to a strongly pseudoconvex CR structure $J_0$ as in Theorem \ref{t;main}. Suppose that $\Aut_{CR}(M, J_0)$ is compact.  Then $\Aut_{CR}(M,J_k)$ is also compact for every sufficiently large $k$. Futhermore, every sequence $\{F_k\in \Aut_{CR}(M, J_k):k=1,2,\ldots\}$ admits a subsequence converging to an element $F\in \Aut_{CR}(M, J_0)$ in $C^\infty$-sense.
\end{prop}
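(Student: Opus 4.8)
The plan is to reduce the entire statement to a single uniform a priori bound on a conformal factor and then to exclude the failure of that bound by a blow-up analysis modeled on Schoen's argument in \cite{Sc}. First I would fix, for each $k$, a pseudohermitian structure $\theta_k$ for $J_k$ of constant Webster scalar curvature $S_k$, obtained by solving the CR Yamabe problem \eqref{e;yamabe} (legitimate by \cite{JL1, JL2, FiS}); after normalizing the volume $\int_M \theta_k\w d\theta_k^n$ and using that $J_0$ is not the sphere, these can be arranged so that $\theta_k\to\theta_0$ and $S_k\to S_0$. Given $F_k\in\Aut_{CR}(M,J_k)$, the form $F_k^*\theta_k$ is again a contact form for $J_k$, so $F_k^*\theta_k=u_k^{p-2}\theta_k$ for a unique positive $u_k$; since $F_k$ preserves $H^{1,0}_k$, the form $F_k^*\theta_k$ has the same constant scalar curvature $S_k$ as $\theta_k$, and hence by \eqref{e;yamabe} the factor $u_k$ solves the CR Yamabe equation $L_{\theta_k}u_k=S_k\,u_k^{p-1}$. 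Because $F_k$ is a diffeomorphism and $\tilde\theta\w d\tilde\theta^n=u^{p}\,\theta\w d\theta^n$ when $\tilde\theta=u^{p-2}\theta$, one gets the uniform energy bound $\|u_k\|_{L^p(M)}^p=\int_M\theta_k\w d\theta_k^n$.

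The conformal relation also governs the map: $dF_k$ acts on the contact distribution conformally with factor $u_k^{1/n}$ and on the characteristic direction by a power of $u_k$, so a two-sided bound $c\le u_k\le C$ (uniform in $k$) yields a uniform $C^1$ bound on $F_k$. The subellipticity of the tangential Cauchy--Riemann system satisfied by $F_k$, together with the estimates of Theorem \ref{t;subelliptic} whose constants depend only on frame constants (hence are uniform near $(J_0,\theta_0)$), then bootstraps this to uniform bounds in every $\Gamma_s(M)$. Applying the same reasoning to $F_k^{-1}$ and invoking the Arzel\`a--Ascoli theorem produces a $C^\infty$-convergent subsequence $F_k\to F$; since $H^{1,0}_k\to H^{1,0}_0$, the limit preserves $H^{1,0}_0$ and, being a diffeomorphism by the control on $F_k^{-1}$, lies in $\Aut_{CR}(M,J_0)$. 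As these bounds hold for \emph{every} element of $\Aut_{CR}(M,J_k)$ and not merely for a sequence, they exhibit $\Aut_{CR}(M,J_k)$ as an equibounded, hence relatively compact, closed subgroup of $\mathrm{Diff}(M)$, giving its compactness for all large $k$.

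Everything thus rests on the uniform bound $\max_M u_k\le C$, which I would establish by contradiction, arranging the quantifiers so that the offending sequence has $k\to\infty$: if no such $N,C$ exist, then for each $j$ there are $k_j>j$ and $F_j\in\Aut_{CR}(M,J_{k_j})$ with $m_j:=\max_M u_{k_j}=u_{k_j}(x_j)\to\infty$. Passing to Folland--Stein normal coordinates centered at $x_j$ (Theorem \ref{t;FS}), where the Heisenberg dilations $\delta_\lambda(z,t)=(\lambda z,\lambda^2 t)$ are available, I rescale $u_{k_j}$ by the scale-invariance of \eqref{e;yamabe} to obtain $v_j$ with $v_j(0)=\max v_j=1$, solving a Yamabe equation whose coefficients, by the approximation formulas of Theorem \ref{t;FS} and the uniformity of the subelliptic constants near $(J_0,\theta_0)$, converge to those of the flat model on $(\mathcal{H}^n,\vartheta_0)$. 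The interior estimates of Theorem \ref{t;subelliptic} give $C^\infty_{\mathrm{loc}}$ bounds, so a subsequence converges to a positive entire solution $v_\infty$ of the flat CR Yamabe equation with $v_\infty(0)=1$.

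The main obstacle is the concluding accounting at the critical exponent $p-1=1+2/n$. By the Jerison--Lee classification \cite{JL1, JL2}, $v_\infty$ is, up to the symmetries of $\mathcal{H}^n$, the standard bubble, which carries exactly the CR Yamabe energy of the sphere; combining this with the uniform energy bound $\|u_{k_j}\|_{L^p}^p=\mathrm{Vol}(M,\theta_{k_j})$ and a positive-mass / energy-gap argument (for which the local scalar-flattening and Green-function solutions of \cite{FiS, JL2, Sc} are the natural tools) forces $(M,J_0)$ to be CR equivalent to the standard sphere $S^{2n+1}$. By Schoen's theorem \cite{Sc} this makes $\Aut_{CR}(M,J_0)$ noncompact, contradicting the hypothesis. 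Hence $u_k$ is uniformly bounded above for $k>N$; a Harnack inequality for the subelliptic operator $L_{\theta_k}$ bounds it below away from zero, and the two preceding paragraphs apply. I expect the delicate points to be this energy accounting that rules out the bubble and the regularity step upgrading the conformal-factor bound to a genuine $C^\infty$ bound on the maps $F_k$.
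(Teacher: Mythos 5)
Your reduction of the whole statement to a uniform bound on the conformal factors $u_k$, and the blow-up scheme for proving that bound, are sound in outline, and in the nonpositive case $Y_k\le 0$ your normalization together with the maximum principle does work (essentially as the paper's Lemma \ref{l;nonpositive} does, via uniqueness of the constant-curvature form). The fatal problem is the concluding step of your contradiction argument when $Y_k>0$. What your tangent-space rescaling actually yields, after the Jerison--Lee classification of the finite-energy entire limit $v_\infty$, is an energy identity forcing $\lim_k Y_k \ge Y(S^{2n+1})$, hence by (\romannumeral2) of Theorem \ref{t;jl} only the \emph{equality} $Y(M,J_0)=Y(S^{2n+1})$. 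To convert this equality into ``$(M,J_0)$ is CR equivalent to $S^{2n+1}$'' you need the equality-case rigidity of the Jerison--Lee inequality, i.e.\ a CR analogue of the Aubin--Schoen positive mass argument. That is precisely what is \emph{not} available in the CR category: Jerison--Lee prove the strict inequality $Y(M)<Y(S^{2n+1})$ only for $\dim M\ge 5$ and $M$ not locally spherical \cite{JL2}; the remaining cases of the CR Yamabe problem were settled by Gamara and Gamara--Yacoub \cite{Ga, GY} by methods (critical points at infinity) that produce solutions without establishing the strict inequality or its rigidity; and CR positive mass theorems are known only under additional hypotheses, failing in general for non-embeddable three-dimensional structures. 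So the sentence ``a positive-mass / energy-gap argument \ldots forces $(M,J_0)$ to be CR equivalent to the standard sphere'' cannot be filled in by citation, and it is the heart of the matter. The same issue already undermines your setup: for $Y_k>0$, Yamabe \emph{minimizers} (as opposed to solutions) are only known to exist under the strict gap, they are not unique, and their compactness as $k\to\infty$ --- which you need to arrange $\theta_k\to\theta_0$ --- is equivalent to ruling out exactly the bubbling you are trying to exclude.

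The paper circumvents all of this by keeping the blow-up on $M$ itself instead of rescaling into the tangent Heisenberg group. In the case $Y_k>0$ it makes no constant-curvature normalization at all: it takes scalar-flat structures $\tilde\theta_k=G_k^{p-2}\theta_k$ built from Green functions of $L_{\theta_k}$, shows via Schoen's expansion lemma (Lemma \ref{l;expand}) that the images $F_k(B_{\theta_k}(x_k,2r))$ exhaust $M\setminus\{y_0\}$ (Lemma \ref{l;y_0}), and then uses the transformation laws \eqref{e;trans_curv} and \eqref{e;trans_tor}, with $\la_k\to\infty$, to conclude that $\tilde\theta_0$ has vanishing Webster curvature and torsion on all of $M\setminus\{y_0\}$; Proposition \ref{p;equi} and the removable singularity theorem then identify $(M,J_0)$ with the sphere, contradicting compactness of $\Aut_{CR}(M,J_0)$ by Schoen's theorem \cite{Sc}. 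In that argument the automorphisms themselves transport flatness to the whole punctured manifold, so no energy accounting and no positive mass theorem are ever needed; this global use of the maps $F_k$, which your dilation-based rescaling discards, is the missing idea.
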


We will make use of the solutions of the CR Yamabe problem for the proof of Proposition \ref{p;compact}. According to the variational approach introduced by Jerison-Lee in \cite{JL1, JL2}, it is very natural to consider the sign of CR Yamabe invariant defined as follows: Let $(M,\theta)$ be a compact pseudohermitian manifold. For a $C^\infty$-smooth real-valued function $u$,
$$A(\theta; u) :=\int_M u\,L_\theta u\,\theta\w d\theta^n =  \int_M (b_n |du|_\theta^2 + R\,u^2)\, \theta\w d\theta^n$$
and let 
$$B(\theta; u) := \int_M |u|^p\,\theta\w d\theta^n.$$ Then the {\em CR Yamabe invariant} $Y(M)$ is defined by 
$$Y(M) := \inf\{ A(\theta; u) : u\in C^\infty(M), \,\,B(\theta; u)=1\}.$$
It is well-known that $Y(M)$ does not depend on the choice of contact form $\theta$. Let $J_k$ be a sequence of strongly pseudoconvex CR structures on $M$ tending to a strongly pseudoconvex CR structure $J_0$ as $k\rightarrow \infty$. We denote by $Y_k$ the CR Yamabe invarint of $(M, J_k)$. For the proof, we may assume either that $Y_k \leq 0$ for every $k$ or that $Y_k >0$ for every $k$. 


\subsection{Case $Y_k \leq 0$.} In this case, we use the minimal solution of the Yamabe problem. 

\begin{thm}[\cite{JL1}] \label{t;jl}
Let $M$ be a compact strongly pseudoconvex CR manifold of dimension $2n+1$. Then 
\begin{itemize}
\item[(\romannumeral1)] $Y(S^{2n+1}) >0$, where $Y(S^{2n+1})$ is the CR Yamabe invariant for the sphere $S^{2n+1}$ with the standard structure.

\item[(\romannumeral2)] $Y(M) \leq Y (S^{2n+1})$, 

\item[(\romannumeral3)] If $Y(M) < Y(S^{2n+1})$, then there exists a positive $C^\infty$-smooth fuunction $u$ which satisfies that $B (\theta; u) =1$ and $A(\theta; u) = Y(M)$ for a given pseudohermitian structure $\theta$.
This function $u$ satisfies 
$$L_\theta u = Y (M) u^{p-1}.$$ That is, the pseudohermitian structure $\tilde \theta = u^{p-2} \theta$ has a constant Webster scalar curvature $\wt R = Y(M)$. 
\end{itemize}
\end{thm}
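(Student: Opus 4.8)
The plan is to treat this as the CR analogue of the Yamabe problem and to follow the variational scheme of Yamabe--Trudinger--Aubin: solve subcritical approximating problems, where the relevant Folland--Stein embedding is compact, and then pass to the critical exponent $p$, the strict inequality $Y(M)<Y(S^{2n+1})$ being precisely what rules out the loss of compactness.

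For \emph{(i)} I would use that the sign of $Y(M)$ is a CR invariant coinciding with the sign of the bottom of the spectrum of $L_\theta$. On $S^{2n+1}$ the standard structure has constant positive Webster scalar curvature $S>0$. Writing $A(\theta;u)=b_n\int_M|du|_\theta^2+\int_M S\,u^2$ and using the lower bound $A(\theta;u)\ge\delta\,\|u\|_{S^2_1(M)}^2$ with $\delta=\min(b_n,\min S)>0$, together with the Folland--Stein Sobolev inequality $\|u\|_{L^p(M)}\le C\,\|u\|_{S^2_1(M)}$ of \cite{FS}, one obtains $A(\theta;u)\ge c>0$ for every $u$ with $B(\theta;u)=1$; hence $Y(S^{2n+1})>0$. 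For \emph{(ii)} I would use a concentrating test-function argument: fix $p_0\in M$ and transplant, via the Folland--Stein coordinates $\Theta_{p_0}$ of Theorem \ref{t;FS}, a rescaled and cut-off extremal of the Yamabe quotient on $(\mathcal{H}^n,\vartheta_0)$; these extremals realize $Y(S^{2n+1})$ because $\mathcal{H}^n$ is CR equivalent to $S^{2n+1}$ minus a point. The approximation formulas in Theorem \ref{t;FS} bound the error terms produced by the curved structure, so as the concentration sharpens the quotient $A(\theta;u)/B(\theta;u)^{2/p}$ tends to $Y(S^{2n+1})$, giving $Y(M)\le Y(S^{2n+1})$.

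For \emph{(iii)} I would first solve the subcritical problems: for $2<q<p$ set $Y_q=\inf\{A(\theta;u):\|u\|_{L^q(M)}=1\}$. Since the embedding $S^2_1(M)\hookrightarrow L^q(M)$ is compact in the subcritical range, a minimizing sequence converges strongly in $L^q$, and lower semicontinuity of $A$ produces a minimizer, which may be taken nonnegative by replacing $u$ with $|u|$. Subelliptic regularity (Theorem \ref{t;subelliptic}(c),(d), bootstrapped) makes it smooth and the strong maximum principle for $L_\theta$ makes it positive; it solves $L_\theta u_q=Y_q\,u_q^{q-1}$. One then checks $Y_q\to Y(M)$ and that $\{u_q\}$ is bounded in $S^2_1(M)$. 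If no concentration of the $L^p$ mass occurs as $q\to p$, the subelliptic estimates give uniform higher-order bounds, and a subsequence of $u_q$ converges in $C^\infty$ to a positive $u$ solving $L_\theta u=Y(M)\,u^{p-1}$ with $A(\theta;u)=Y(M)$, as required.

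The main obstacle is precisely the exclusion of concentration at the critical exponent $p$. Suppose the mass of $u_q$ accumulated at a point $x_0$. Rescaling in the Folland--Stein coordinates $\Theta_{x_0}$ by the concentration scale, and invoking the convergence of the rescaled operators to $\Delta_{\vartheta_0}$ in Theorem \ref{t;FS}, the rescaled functions would converge to a nonzero finite-energy solution of the Yamabe equation on the flat Heisenberg group $(\mathcal{H}^n,\vartheta_0)$. By the sharp Heisenberg Sobolev inequality, equivalently the classification of such extremals, any such bubble carries energy at least $Y(S^{2n+1})$, forcing $Y(M)\ge Y(S^{2n+1})$ and contradicting the hypothesis $Y(M)<Y(S^{2n+1})$. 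Thus concentration is impossible and the minimizer exists. This rescaling analysis tying the bubble energy to the sharp constant $Y(S^{2n+1})$, carried out with the subelliptic machinery of Theorems \ref{t;FS} and \ref{t;subelliptic}, is the heart and the hardest part of the argument.
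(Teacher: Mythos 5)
This theorem is not proved in the paper at all: it is quoted from Jerison--Lee \cite{JL1}, so the relevant comparison is with their original argument. Your proposal reconstructs the same overall variational scheme that Jerison and Lee use: (i) positivity of $Y(S^{2n+1})$ from the positive constant Webster scalar curvature of the standard sphere together with the Folland--Stein embedding; (ii) transplantation of concentrated test functions from $(\mathcal{H}^n,\vartheta_0)$ into $M$ via normal coordinates; (iii) subcritical minimizers solving $L_\theta u_q = Y_q u_q^{q-1}$ (existence by compactness of $S^2_1(M)\hookrightarrow L^q(M)$ for $q<p$), regularity and the maximum principle, then passage to the critical exponent using the strict inequality $Y(M)<Y(S^{2n+1})$. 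Where you genuinely diverge is the mechanism converting that strict inequality into compactness. You propose a blow-up/bubbling analysis: a concentrating family, rescaled in Folland--Stein coordinates, would produce a nontrivial finite-energy solution on the Heisenberg group carrying energy at least $Y(S^{2n+1})$. Jerison and Lee argue instead through an almost-sharp Sobolev inequality on $M$: for every $\epsilon>0$ there is $C_\epsilon$ with
\begin{equation*}
\norm{u}_{L^p(M)}^2 \;\leq\; \frac{1+\epsilon}{Y(S^{2n+1})}\, A(\theta;u) + C_\epsilon \norm{u}_{L^2(M)}^2 ,
\end{equation*}
which, combined with their regularity theorem (Proposition \ref{p;est} in this paper), yields uniform $L^r$ bounds with $r>p$ and hence uniform $C^k$ bounds for the subcritical minimizers; no rescaling limit is ever taken. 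Both routes are valid; theirs is more elementary, since it avoids the two delicate points your sketch leaves open (nontriviality of the bubble, which needs a Harnack or $\epsilon$-regularity argument, and the energy bookkeeping between $u_q$ and the bubble), while yours is the more robust template for situations beyond minimizers. Two smaller corrections: in (ii) you cannot invoke \emph{extremals} of the Heisenberg Yamabe quotient, since their existence and classification were only established by Jerison and Lee in a later paper not needed here; almost-extremizing compactly supported test functions serve equally well. Likewise, the bubble energy bound in (iii) needs no classification result, only the definition of the sharp constant tested against the limit solution.
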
 

It is known in \cite{JL2} that $Y(M) < Y(S^{2n+1})$ if $M$ is not locally spherical and $2n+1 \geq 5$. The cases that $2n+1=3$ or $M$ is spherical are dealt by Gamara and Gamara-Yacoub in \cite{Ga, GY}.

\begin{prop}[Theorem 7.1 in \cite{JL1}]\label{p;unique}
If $Y(M) \leq 0$, then a pseudohermitian structure with constant Webster scalar curvature is unique up to constant multiples. As a consequnce, there is a unique pseudohermitian structure with constant Webster scalar curvature under the unit volume condition, if $Y(M) \leq 0$.
\end{prop}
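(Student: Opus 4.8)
The plan is to reduce the statement to a subelliptic maximum principle applied to the CR Yamabe equation. Suppose $\theta_1 = u_1^{p-2}\theta$ and $\theta_2 = u_2^{p-2}\theta$ are two pseudohermitian structures on $(M,J_0)$ with constant Webster scalar curvatures $S_1$ and $S_2$ (here $p = 2+2/n$). Setting $v = u_2/u_1 > 0$ and taking $\theta_1$ as the base structure, equation \eqref{e;yamabe} becomes
\[
b_n\,\Delta_{\theta_1} v + S_1\, v = S_2\, v^{p-1}, \qquad v > 0,
\]
with $S_1, S_2$ now \emph{constants}. Thus it suffices to prove that $v$ is constant; the unit-volume normalization then fixes the remaining scalar and yields the second assertion. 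First I would integrate this equation over $M$ against the volume form $\theta_1\w d\theta_1^{\,n}$. Since $\int_M \Delta_{\theta_1} v = 0$ on the closed manifold $M$, this gives $S_1\int_M v = S_2\int_M v^{p-1}$, and because $v>0$ both integrals are strictly positive; hence $S_1$ and $S_2$ share the same sign (and vanish together).

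The key step is to pin down that common sign using the hypothesis $Y(M)\le 0$. I would observe that for a structure $\tilde\theta$ of constant scalar curvature $\tilde S$ one has $L_{\tilde\theta} 1 = \tilde S$, so the constant function $1$ is a positive eigenfunction of the CR Laplacian $L_{\tilde\theta}$. Because $\Delta_{\tilde\theta}$ satisfies H\"ormander's bracket condition (the CR distribution is contact, hence bracket-generating), the subelliptic strong maximum principle applies and the only eigenfunctions of $L_{\tilde\theta}$ that do not change sign are ground states; therefore $\tilde S$ equals the lowest eigenvalue $\lambda_1(L_{\tilde\theta})$. Comparing the Rayleigh-type quotient defining $\lambda_1$ (the $L^2$-normalized infimum of $A(\tilde\theta;\cdot)$) with the $L^p$-normalized infimum defining $Y(M)$ shows that $\lambda_1(L_{\tilde\theta})$ and $Y(M)$ have the same sign, since $A(\tilde\theta;\cdot)$ is sign-definite exactly when either quotient is. As $Y(M)\le 0$, I conclude $\tilde S \le 0$; applying this to both $\theta_1$ and $\theta_2$ gives $S_1, S_2 \le 0$, and when $Y(M)<0$ both are strictly negative.

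With the signs in hand the maximum principle finishes the argument. At a point where $v$ attains its maximum $v_{\max}$ one has $\Delta_{\theta_1} v \ge 0$, so the equation forces $S_2\, v_{\max}^{p-2} - S_1 \ge 0$; at a minimum point $\Delta_{\theta_1} v \le 0$ gives $S_2\, v_{\min}^{p-2} - S_1 \le 0$. If $Y(M)=0$ then $S_1 = S_2 = 0$ and the equation reduces to $\Delta_{\theta_1} v = 0$, whence the horizontal gradient of $v$ vanishes and $v$ is constant (the CR distribution being bracket-generating on the connected $M$). If $Y(M)<0$ then $S_1, S_2 < 0$, so $s \mapsto S_2 s^{p-2} - S_1$ is strictly decreasing; the two inequalities then read $v_{\max}^{p-2} \le S_1/S_2 \le v_{\min}^{p-2}$, which forces $v_{\max} \le v_{\min}$ and hence $v \equiv (S_1/S_2)^{1/(p-2)}$. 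In either case $\theta_2$ is a constant multiple of $\theta_1$, as claimed. The main obstacle I anticipate is the sign determination in the middle step: the sandwich in the last paragraph genuinely fails when the scalar curvature is positive (as it must, since the sphere carries a large family of distinct constant-scalar-curvature structures), so the whole argument hinges on correctly linking the sign of the constant scalar curvature to that of $Y(M)$ through the ground-state characterization, which in turn relies on the subelliptic maximum principle for $\Delta_\theta$.
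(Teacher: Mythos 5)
The paper itself gives no argument for this proposition (it is quoted directly as Theorem 7.1 of \cite{JL1}), and your overall strategy --- reduce to $b_n\Delta_{\theta_1}v+S_1v=S_2v^{p-1}$ for $v=u_2/u_1$ and run the maximum principle at the extrema of $v$ --- is exactly the classical proof; the reduction, the sign convention $\Delta_{\theta_1}v\ge 0$ at a maximum (the paper's $\Delta_\theta$ is the positive sublaplacian), and both closing cases are correct. The genuine gap is in the middle step, and precisely in the case $Y(M)=0$ that the hypothesis $Y(M)\le 0$ must cover. Your justification that $\lambda_1(L_{\tilde\theta})$ and $Y(M)$ ``have the same sign'' because ``$A(\tilde\theta;\cdot)$ is sign-definite exactly when either quotient is'' proves only the two easy equivalences $Y<0\Leftrightarrow\lambda_1<0$ (a negative test function stays negative under rescaling) and $Y\ge 0\Leftrightarrow\lambda_1\ge 0$; it cannot separate $\{=0\}$ from $\{>0\}$. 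So when $Y(M)=0$ all it yields is $\tilde S=\lambda_1\ge 0$, which is the opposite of the inequality $\tilde S\le 0$ your final case analysis requires. The missing implication, $\lambda_1>0\Rightarrow Y>0$, is not a formal fact about Rayleigh quotients: $p=2+2/n$ is the critical exponent, the embedding $S^2_1\hookrightarrow L^p$ is continuous but not compact, and strict positivity of $A$ on nonzero functions is a priori compatible with $\inf\{A(\tilde\theta;u):B(\tilde\theta;u)=1\}=0$ along concentrating sequences --- which is exactly the phenomenon behind your own remark about the sphere.

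The gap is local and fixable with a tool already stated in the paper, after which the eigenvalue/ground-state detour can be dropped entirely. If a structure $\tilde\theta$ has constant scalar curvature $\tilde S>0$, then for every $u$ with $B(\tilde\theta;u)=1$, Proposition~\ref{p;sobolev} applied to $(M,\tilde\theta)$ gives
\[
A(\tilde\theta;u)=\int_M \left(b_n|du|^2_{\tilde\theta}+\tilde S\,u^2\right)\tilde\theta\wedge d\tilde\theta^n\;\ge\;\min(b_n,\tilde S)\int_M\left(|du|^2_{\tilde\theta}+u^2\right)\tilde\theta\wedge d\tilde\theta^n\;\ge\;C^{-1}\min(b_n,\tilde S)\;>\;0,
\]
so $Y(M)>0$. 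Contrapositively, $Y(M)\le 0$ forces $\tilde S\le 0$. Combined with the elementary observations that $\tilde S\ge 0$ implies $A(\tilde\theta;\cdot)\ge 0$ hence $Y(M)\ge 0$ (so $Y<0$ gives $\tilde S<0$), and that $Y(M)=0$ implies $A(\tilde\theta;1)=\tilde S\,\mathrm{Vol}_{\tilde\theta}(M)\ge 0$ (so $Y=0$ gives $\tilde S=0$), you obtain exactly the sign statements $S_1,S_2<0$ when $Y<0$ and $S_1=S_2=0$ when $Y=0$, and the rest of your argument goes through verbatim.
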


\begin{prop}[Theorem 5.15 in \cite{JL1}]\label{p;est}
Let $M$ be a compact strongly pseudoconvex CR manifold of dimension $2n+1$ and let $\theta$ be a pseudohermitian structure. Suppose that $f, g \in C^\infty (M)$, $u\geq 0$, $u\in L^r$ for some $r>p = 2+2/n$ and 
$$\Delta_\theta u + g u = f u^{q-1}$$ in the distribution sense for some $2\leq q\leq p$. Then $u\in C^\infty(M)$, $u>0$. Futhermore, $\|u\|_{C^k}$ depends only on $\|u\|_{L^r}, \|f\|_{C^k}, \|g\|_{C^k}$ and frame constants, but not on $q$.
\end{prop}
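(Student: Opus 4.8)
The plan is to run a bootstrap argument on the subelliptic equation $\Delta_\theta u = f u^{q-1} - g u$ using the estimates of Theorem \ref{t;subelliptic}, organized in three stages: first improving the integrability of $u$ up to $L^\infty$, then establishing strict positivity, and finally iterating the Schauder-type estimate to reach $C^\infty(M)$. Throughout, the delicate point is to keep every constant independent of $q\in[2,p]$.

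First I would improve integrability. Writing $F := f u^{q-1} - g u$ and using $u\in L^r$ together with $q-1\le p-1 = 1+2/n$, one has $F\in L^{r/(q-1)}$, since the term $gu$ lies in the better space $L^r$. The $L^r$ subelliptic estimate (c) then gives $u\in S^{r/(q-1)}_2(U)$ locally, and the Folland-Stein Sobolev embedding $S^{\rho}_2\hookrightarrow L^{\rho'}$ with $1/\rho' = 1/\rho - 2/(2n+2)$ from \cite{FS} yields $u\in L^{r_1}$ with
$$\frac{1}{r_1} = \frac{q-1}{r} - \frac{1}{n+1}.$$
The gain in $1/r$ per step is $\tfrac{1}{n+1} - \tfrac{q-2}{r_j}$, which is strictly positive precisely because $r>p = (n+1)(p-2)\ge (n+1)(q-2)$; this is the only place where the strict inequality $r>p$ is used, and even the worst case $q=p$ leaves a uniform positive gain. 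Hence after finitely many steps the integrability exceeds $n+1$, and part (a) (with $k=2$) together with (b) places $u$ in some $\Gamma_s(U)$ with $s>0$, and in particular in $L^\infty$ with a bound depending only on $\|u\|_{L^r}$, $\|f\|_{C^0}$, $\|g\|_{C^0}$ and frame constants. I would emphasize that the number of iterations and the accumulated constants stay uniform over $q\in[2,p]$ because the per-step gain is bounded below independently of $q$.

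Next, positivity. Since $q\ge 2$ and $u$ is now bounded, $u^{q-2}$ is bounded, so the equation can be recast as the linear subelliptic equation $\Delta_\theta u + c\,u = 0$ with $c := g - f u^{q-2}\in L^\infty$. The strong maximum principle for subelliptic operators of this type then forces a nonnegative solution to be either identically zero or strictly positive; discarding the trivial case gives $u>0$, and compactness of $M$ provides a lower bound $u\ge m>0$. With $u$ taking values in a fixed compact subinterval $[m,M]\subset(0,\infty)$, the nonlinearity $t\mapsto t^{q-1}$ is smooth there with all derivatives bounded uniformly in $q\in[2,p]$.

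Finally I would bootstrap regularity. Because $u$ takes values in $[m,M]$ and $t\mapsto t^{q-1}$ is smooth on that interval, composition preserves the Folland-Stein Hölder classes, so if $u\in\Gamma_s(U)$ then $F = f u^{q-1} - g u\in\Gamma_s(U)$ with norm controlled by $\|f\|_{C^k}$, $\|g\|_{C^k}$ and $\|u\|_{\Gamma_s(U)}$. The Schauder-type estimate (d) then gives $u\in\Gamma_{s+2}(U)$, and iterating raises $s$ without bound; by (b) this yields $u\in C^\infty(M)$, and patching the local estimates with the partition of unity on the compact manifold produces the global $\|u\|_{C^k}$ bounds. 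The asserted $q$-independence follows by tracking constants through each step, the only $q$-dependence entering through $q-1\in[1,1+2/n]$ and through the derivatives of $t\mapsto t^{q-1}$ on $[m,M]$, both uniform over the compact parameter range. The main obstacle is the integrability bootstrap of the first stage, where the criticality of the exponent $p$ must be exploited to guarantee a strictly positive, $q$-uniform gain; once $L^\infty$ and positivity are in hand, the remaining Schauder iteration is routine.
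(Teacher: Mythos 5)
You should first be aware that the paper itself does not prove this proposition: it is imported wholesale as Theorem 5.15 of \cite{JL1}, the only added content being the remark that the local statement there globalizes by a partition of unity subordinate to normal coordinate charts. So the comparison is really with Jerison--Lee's argument, and your outline reconstructs it along the standard lines: the stage-one integrability iteration is correct, and your identification of the identity $(n+1)(p-2)=p$ as the reason the per-step gain $\frac{1}{n+1}-\frac{q-2}{r_j}\geq\frac{r-p}{(n+1)r}$ is positive and uniform over $q\in[2,p]$ is exactly the right mechanism (this is precisely where $r>p$ enters). Two glosses are worth flagging, though they are standard: the estimates of Theorem \ref{t;subelliptic} are a priori estimates for $C^\infty_0$ functions, so applying (c) to a distributional $L^r$ solution requires a regularization/parametrix step (this is where \cite{JL1} does genuine work with truncated test functions); and positivity of course requires $u\not\equiv 0$, as you note.

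The genuine gap is in the ``Furthermore'' clause, which is the part of the proposition this paper actually uses (it is what makes Lemma \ref{l;nonpositive} work). Your stage-three constants depend on the lower bound $m=\inf_M u$: for $j>q-1$ the $j$-th derivative of $t\mapsto t^{q-1}$ on $[m,M]$ involves the negative power $t^{q-1-j}$, so the $\Gamma_s$-norms of $fu^{q-1}$, and hence your final $C^k$ bounds, blow up as $m\to 0$. But the strong maximum principle only tells you that $m>0$; it does not bound $m$ in terms of $\|u\|_{L^r}$, $\|f\|_{C^k}$, $\|g\|_{C^k}$ and frame constants, which is what the stated dependence requires. To close this you need the quantitative Harnack inequality (Proposition 5.12 of \cite{JL1}, the same tool this paper invokes in the proof of Lemma \ref{l;conv}), chained over the compact connected manifold $M$: it gives $\sup_M u\leq C\inf_M u$ with $C$ controlled by $\|f\|_{C^0}$, $\|g\|_{C^0}$, the stage-one $L^\infty$ bound and frame constants. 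Combined with $\sup_M u\geq \mathrm{Vol}_\theta(M)^{-1/r}\|u\|_{L^r}$ this bounds $m$ by the allowed data; alternatively, rescale $v=u/\sup_M u$, which solves the same equation with $f$ replaced by $(\sup_M u)^{q-2}f$, run your bootstrap on $v$ (whose values lie in the controlled interval $[C^{-1},1]$), and recover $\|u\|_{C^k}=(\sup_M u)\,\|v\|_{C^k}$. Without some such step, ``tracking constants'' does not deliver the asserted conclusion, because $m$ is not among the quantities on which the bound is allowed to depend.
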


Indeed. the above lemma is stated in a local version in \cite{JL1}. But it is obvious it holds globally by taking a partition of unity subordinate to a chart of normal coordinates. 

\begin{prop}[Case $k=1$, $r=2$ and $s=p$ in Proposition 5.5 of \cite{JL1}]\label{p;sobolev}
For a compact pseudohermitian manifold $(M, \theta)$ of dimension $2n+1$, there exists a constant $C>0$ such that 
$$\int_M |v|^p \,\theta\w d\theta^n \leq C \int_M (|dv|_\theta^2 + |v|^2) \theta\w d\theta^n$$ 
for every $C^\infty$-smooth function $v$ on $M$. 
\end{prop}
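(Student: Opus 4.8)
The plan is to derive the global Sobolev inequality from its local Heisenberg-group counterpart by means of a partition of unity, the essential point being that the exponent $p=2+2/n$ equals the critical Sobolev exponent $2Q/(Q-2)$ for the homogeneous dimension $Q=2n+2$ of $\mathcal{H}^n$. Thus the second-order subelliptic operator $\Delta_\theta$ furnishes precisely a gain of the critical first-order type, and the proposition is nothing but the global form of the Folland--Stein embedding $S^2_1(M)\hookrightarrow L^p(M)$, the $L^p$-endpoint companion of the H\"older estimates recorded in Theorem \ref{t;subelliptic}.

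First I would fix a finite cover $U_1,\dots,U_m$ by normal-coordinate charts with a subordinate partition of unity $\phi_1,\dots,\phi_m$, exactly as in the construction of the global Folland--Stein spaces at the end of Section 2. Each $\phi_j v$ is compactly supported inside a single chart, so the local Folland--Stein inequality from \cite{FS} --- obtained there from the homogeneous fundamental solution of $\Delta_{\theta_0}$ on $\mathcal{H}^n$ and the Hardy--Littlewood--Sobolev estimate --- gives
$$\|\phi_j v\|_{L^p(U_j)} \le C\,\|\phi_j v\|_{S^2_1(U_j)} = C\Big(\|\phi_j v\|_{L^2(U_j)} + \sum_{\beta=1}^{2n}\|X_\beta(\phi_j v)\|_{L^2(U_j)}\Big),$$
with $C$ depending only on frame constants by the comparison estimates of Theorem \ref{t;FS}.

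Next I would globalize. Using $\sum_j\phi_j\equiv 1$ and the triangle inequality in $L^p(M)$,
$$\|v\|_{L^p(M)} \le \sum_{j=1}^m \|\phi_j v\|_{L^p(U_j)},$$
and expanding $X_\beta(\phi_j v) = (X_\beta\phi_j)\,v + \phi_j\,X_\beta v$, while using that the $\phi_j$ and their horizontal derivatives are bounded on the compact $M$, controls each summand by $\|v\|_{L^2(M)} + \sum_\beta\|X_\beta v\|_{L^2(M)}$. Squaring (via $(a+b)^2\le 2a^2+2b^2$) then gives
$$\|v\|_{L^p(M)}^2 \le C\Big(\|v\|_{L^2(M)}^2 + \sum_{\beta=1}^{2n}\|X_\beta v\|_{L^2(M)}^2\Big).$$
Since the real and imaginary parts $\RE W_\al,\IM W_\al$ span the CR distribution and $\sum_\beta|X_\beta v|^2$ is comparable to the Levi-form norm $|dv|_\theta^2$ of the horizontal gradient (with constant controlled by $g_{\al\bar\be}$, hence by frame constants), the right-hand side is at most $C\int_M(|dv|_\theta^2+|v|^2)\,\theta\w d\theta^n$, which is the asserted inequality.

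The hard part is the local endpoint embedding itself. In contrast with the H\"older/Morrey estimates of Theorem \ref{t;subelliptic}, the critical $L^p$-bound at $r=2$, $k=1$ cannot be reached by elementary commutator or interpolation arguments: it rests on the sharp fractional-integration estimate for the convolution kernel of $\Delta_{\theta_0}^{-1/2}$ on the homogeneous group $\mathcal{H}^n$. Granting that input from \cite{FS}, the remaining steps are routine; the only points requiring care are that the cut-off commutators $[\phi_j, X_\beta]$ cost nothing beyond the already-present $L^2$ norm of $v$, and that every constant is kept uniform over a $C^\infty$-neighborhood of $(J_0,\theta_0)$ by tracking frame constants through Theorems \ref{t;FS} and \ref{t;subelliptic} --- the uniformity needed for the applications in the sequel.
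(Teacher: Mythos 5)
Your argument is correct and is essentially the same as the paper's own treatment: the paper does not prove this proposition but cites Proposition 5.5 of \cite{JL1}, whose proof is exactly the local Folland--Stein embedding in normal coordinates (via the fractional-integration estimate of \cite{FS}) that you invoke, and the passage from the local statement to the compact manifold is the same partition-of-unity globalization the paper uses (see the remark following Proposition \ref{p;est}), which you carry out explicitly. Your handling of the chart-dependent frames $X_\beta$, the commutators $[\phi_j,X_\beta]$, the comparability of $\sum_\beta|X_\beta v|^2$ with $|dv|_\theta^2$, and the uniformity of constants in terms of frame constants is all sound.
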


Since we are considering CR structures converging to the target structure $J_0$, we can choose also a sequence $\{\theta_k\}$ of contact forms which tends to a target pseudohermitian structure $\theta_0$ in $C^\infty$-sense. Without loss of generality, we always assume that $\int_M \theta_k\w d\theta_k^n =1$ for every $k$.

\begin{lem}\label{l;nonpositive}
Suppose that $Y_k \leq 0$ for every $k$. Let $u_k>0$ be the (unique) solution as in {\em (\romannumeral3)} of Theorem \ref{t;jl} with respect to $(J_k, \theta_k)$. Then for each nonnegative integer $l$, there exists a constant $C$ such that $\|u_k\|_{C^l} \leq C$ for every $k$.
\end{lem}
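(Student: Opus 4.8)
The plan is to reduce everything to a uniform $L^r$ bound (equivalently, a uniform sup bound) on $u_k$ and then invoke Proposition \ref{p;est}. Recall that $u_k$ satisfies the CR Yamabe equation $b_n\Delta_{\theta_k}u_k + S_k u_k = Y_k u_k^{p-1}$, where $S_k$ denotes the Webster scalar curvature of $\theta_k$ and $p=2+2/n$, together with the normalizations $\int_M u_k^p\,\theta_k\w d\theta_k^n =1$ and $\int_M\theta_k\w d\theta_k^n=1$. Dividing by $b_n$ puts this in the form $\Delta_{\theta_k}u_k + g_k u_k = f_k u_k^{p-1}$ treated in Proposition \ref{p;est}, with $g_k=S_k/b_n$ and the constant $f_k=Y_k/b_n$. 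Since $\theta_k\to\theta_0$ in $C^\infty$, the scalar curvatures $S_k$ converge to $S_0$ in $C^\infty$, so $\|g_k\|_{C^l}$ is uniformly bounded; and the frame constants governing the estimates may be taken uniform in $k$, as noted after Theorem \ref{t;subelliptic}. Thus, once we know that $\{Y_k\}$ is bounded and that $\|u_k\|_{L^r}\le C$ for a single fixed $r>p$, Proposition \ref{p;est} delivers the bound $\|u_k\|_{C^l}\le C$ for every $l$.

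First I would check that $\{Y_k\}$ is bounded. The upper bound $Y_k\le 0$ is the hypothesis. For the lower bound, I use that $Y_k=A(\theta_k;u_k)=b_n\int_M|du_k|_{\theta_k}^2 + \int_M S_k u_k^2$ while $\int_M u_k^p=1$. By H\"older's inequality and the unit-volume normalization, $\int_M u_k^2\le(\int_M u_k^p)^{2/p}=1$, so $Y_k\ge -\sup_k\|S_k\|_{C^0}$, a finite bound independent of $k$. In particular $\int_M|du_k|_{\theta_k}^2$ is uniformly bounded, so $\{u_k\}$ is bounded in the horizontal Sobolev space $S^2_1(M)$; but the embedding $S^2_1\hookrightarrow L^p$ of Proposition \ref{p;sobolev} is exactly critical, so this energy bound alone yields no integrability beyond $L^p$.

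The heart of the proof is therefore the uniform sup bound, which I would obtain by a blow-up argument ruling out concentration. Suppose, for contradiction, that $m_k:=\max_M u_k=u_k(x_k)\to\infty$ along a subsequence. Passing to Folland-Stein normal coordinates $\Theta_{x_k}$ centered at $x_k$ (Theorem \ref{t;FS}) and rescaling by the Heisenberg dilation $\de_{\la}(z,t)=(\la z,\la^2 t)$ with $\la_k=m_k^{-(p-2)/2}\to 0$, I set $v_k(y)=m_k^{-1}\,u_k\!\left(\Theta_{x_k}^{-1}(\de_{\la_k}y)\right)$, so that $0<v_k\le 1$ and $v_k(0)=1$. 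The homogeneity $\Delta_{\theta_0}(w\circ\de_\la)=\la^2(\Delta_{\theta_0}w)\circ\de_\la$ together with the approximation formulas of Theorem \ref{t;FS} shows that, after multiplying the equation by $\la_k^2/m_k$ and using $m_k^{p-2}\la_k^2=1$, the $S_k$-term is multiplied by $\la_k^2\to0$ and drops out, leaving $b_n\Delta_{\theta_0}v_k = Y_k v_k^{p-1}+o(1)$ on larger and larger Heisenberg balls. The uniform bound $0<v_k\le 1$ and the uniform subelliptic estimates of Theorem \ref{t;subelliptic} give local $C^\infty$ bounds, so after passing to a further subsequence $v_k$ converges in $C^\infty_{loc}$ to a nonnegative entire solution $v$ on $\mathcal H^n$ of $b_n\Delta_{\theta_0}v = Y_\infty v^{p-1}$, where $Y_\infty=\lim Y_k\le 0$, with $v(0)=1$. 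A change of variables shows the $L^p$ mass is dilation-invariant, so by Fatou $\int_{\mathcal H^n}v^p\le\liminf_k\int_M u_k^p=1$. The key obstacle, and the point where the sign hypothesis $Y_k\le 0$ enters decisively, is resolved as follows: since $Y_\infty\le0$ and $v\ge0$, the right-hand side is nonpositive, so $\Delta_{\theta_0}v\le0$ and $v$ is subharmonic for the Heisenberg sublaplacian; attaining its interior maximum $v(0)=1$, it must be constant by the strong maximum principle, contradicting $\int_{\mathcal H^n}v^p\le1<\infty$. Hence $m_k$ is bounded.

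Finally, a uniform bound $\max_M u_k\le C$ together with $\int_M u_k^p=1$ and unit volume yields $\|u_k\|_{L^r}\le C$ for every $r$, in particular for some $r>p$. Feeding this, the uniform bounds on $g_k=S_k/b_n$ and on the constants $f_k=Y_k/b_n$, and the uniform frame constants into Proposition \ref{p;est} gives $\|u_k\|_{C^l}\le C$ for every $l$, as claimed. I expect the blow-up and non-concentration step to be the main obstacle; the remaining difficulty is the routine but careful verification that the rescaled equations and their coefficients converge to the flat Heisenberg model uniformly on compact sets, which is precisely what the approximation formulas of Theorem \ref{t;FS} are designed to provide.
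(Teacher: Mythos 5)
Your proof is correct, but it takes a genuinely different route from the paper's at the crucial step. Both arguments reduce the lemma, via Proposition \ref{p;est} together with uniformity of frame constants, to a uniform $L^r$ bound on $u_k$ for some fixed $r>p$; your preliminary observation that $0\ge Y_k\ge -\sup_k\|S_k\|_{C^0}$ (needed so that the constant right-hand side $f_k=Y_k/b_n$ stays bounded in $C^l$) is a point the paper leaves implicit. The paper obtains the $L^r$ bound by a short integral estimate: multiplying the Yamabe equation by $u_k^{p-1}$ and integrating, the term $Y_k\int_M u_k^{2(p-1)}$ is discarded precisely because $Y_k\le 0$, which gives a uniform bound on $\int_M u_k^{p-2}|du_k|^2_{\theta_k}$; setting $w_k=u_k^{p/2}$ and applying the critical Sobolev inequality (Proposition \ref{p;sobolev}, with constant uniform in $k$ by convergence of the structures) then yields $\|u_k\|_{L^{r}}\le C$ with $r=p^2/2>p$. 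You instead run a blow-up argument: rescaling at a hypothetical sequence of maxima by Heisenberg dilations, extracting an entire limit solution $v$ on $\mathcal{H}^n$ of $b_n\Delta_{\theta_0}v=Y_\infty v^{p-1}$ with $v(0)=1=\max v$ and $\int_{\mathcal{H}^n}v^p\le 1$, and using $Y_\infty\le 0$ to make $v$ subharmonic, hence constant by the strong maximum principle, contradicting the finite critical mass. Your computations are sound --- the exponent bookkeeping $m_k^{-p}\lambda_k^{-(2n+2)}=1$ is exactly the criticality of $p$, and the error terms from Theorem \ref{t;FS} do vanish on compacta after rescaling --- but the route costs more: you must justify $C^\infty_{loc}$ compactness of the rescaled solutions by a bootstrap with Theorem \ref{t;subelliptic}, and you invoke the strong maximum principle for the degenerate operator $\Delta_{\theta_0}$, a fact (Bony's theorem for H\"ormander-type operators) that the paper never needs or cites and that you should at least reference. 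What your approach buys is robustness: it is the concentration-compactness scheme one would need if the sign of $Y_k$ were positive, and it produces a sup bound directly. What the paper's approach buys is brevity and elementarity: one integration by parts plus one application of the critical Sobolev inequality, with the hypothesis $Y_k\le 0$ entering at the integral level rather than through a Liouville-type theorem.
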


\begin{proof}
Since $u_k$ satisfies 
\beg\label{e;eqn}
b_n \Delta_{\theta_k} u_k + R_k u_k = Y_k u_k^{p-1},
\eeg
where $R_k$ is the Webster scalar curvature for $\theta_k$, we have 
$$\int_M \frac{p-1}{2}\,b_n u_k^{p-2}|du_k|_{\theta_k} \theta_k\w d\theta_k^n \leq \int_M |R_k u_k^p| \theta_k\w d\theta_k^n$$
by integrating after multiplying $u_k^{p-1}$ to the both side of \eqref{e;eqn}, since $Y_k \leq 0$. Therefore, the function $w_k := u_k^{p/2}$ satisfies
$$\int_M |dw_k|_{\theta_k}^2 \,\theta_k\w d\theta_k^n \leq C \int_M w_k^2 \,\theta_k\w d\theta_k^n= C\int_M u_k^p\, \theta_k\w d\theta_k^n =C,$$ since $R_k$ is bounded uniformly for $k$. Moreover since $(J_k,\theta_k) \rightarrow (J_0, \theta_0)$ in $C^\infty$-sense, Proposition \ref{p;sobolev} implies that there exists a constant $C>0$ independent of $k$ such that 
$$\int_M w^p_k \,\theta_k\w d\theta_k^n \leq C\int_M (|dw_k|_{\theta_k}^2 + w_k^2)\,\theta_k\w d\theta_k^n,$$ which is uniformly bounded for every $k$. This implies that $\|u_k\|_{L^r}$ is uniformly bounded as $(J_k, \theta_k) \rightarrow (J_0, \theta_0)$, where $r=p^2/2 >p$. Then the conclusion follows from Proposition \ref{p;est}, since frame constants for $(J_k, \theta_k)$ are also uniformly bounded as $(J_k, \theta_k) \rightarrow (J_0, \theta_0)$ in $C^\infty$-sense.
\end{proof}

\medskip

If $Y_k \leq 0$ for every $k\geq1$, then by taking a subsequnce, we may assume the sequence $\{u_k\}$ of solutions of the Yamabe problem with respect to $(J_k, \theta_k)$ converges to $u_0$, the solution of the Yamabe problem with respect to $(J_0, \theta_0)$ in $C^\infty$-sense by Lemma \ref{l;nonpositive}. Replacing $\theta_k$ by $u_k^{p-2} \theta_k$, then we may assume the Webster scalar curvature of $\theta_k$ is a nonpositive constant for every $k$. In this case, it is known that the CR automorphism group of $(M, J_k)$ coincides with the pseudohermitian automorphism group for $(M, J_k, \theta_k)$. Let $g_k$ be the Riemannian metric on $M$ defined by 
$$g_k = \theta_k\otimes \theta_k + d\theta_k (\cdot, J_k\cdot)$$
for each $k$. Then we see that $g_k\rightarrow g_0 = \theta_0\otimes\theta_0 + d\theta_0 (\cdot, J_0\cdot)$ in $C^\infty$-sense, and the CR automorphism groups $\Aut_{CR}(M, J_k)$ and $\Aut_{CR}(M,J_0)$ are subgroups of the isometry groups of $g_k$ and $g_0$, respectively. Then the conclusion follows from the proof of Theorem \ref{t;Ebin}.

\subsection{Case $Y_k >0$.} We start from proving that it is enough to consider $C^0$-convergence instead of $C^\infty$-convergence.

\begin{lem}\label{l;conv}
Suppose for a sequence $\{F_k \in \Aut_{CR}(M, J_k)\}$, $F_k\rightarrow F$ and $F^{-1}_k \rightarrow G$ in $C^0$-sense for some continuous mappings $F$ and $G$. Then $F\in \Aut_{CR}(M, J_0)$, $G= F^{-1}$ and $F_k\rightarrow F$ in $C^\infty$-sense.
\end{lem}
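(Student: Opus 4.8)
The plan is to study, for each $k$, the conformal factor relating $\theta_k$ to its pullback under $F_k$. Since $F_k$ is a CR automorphism of $(M,J_k)$, the form $F_k^*\theta_k$ is again a pseudohermitian structure for $J_k$, so $F_k^*\theta_k = u_k^{p-2}\theta_k$ for a unique positive smooth function $u_k$. Because $F_k$ is a diffeomorphism, the Webster scalar curvature of $F_k^*\theta_k$ is $R_k\circ F_k$, so $u_k$ solves the CR Yamabe equation \eqref{e;yamabe}, namely $L_{\theta_k}u_k = (R_k\circ F_k)\,u_k^{p-1}$. (One may either normalize each $\theta_k$ to have constant scalar curvature, so that this coefficient becomes the constant $R_k$, or carry the term $R_k\circ F_k$ through a coupled bootstrap in which improved regularity of $F_k$ improves that of the coefficient.) A direct computation gives $F_k^*(\theta_k\w d\theta_k^n) = u_k^p\,\theta_k\w d\theta_k^n$, and integrating over $M$, using that $F_k$ is an orientation-preserving diffeomorphism and $\int_M\theta_k\w d\theta_k^n=1$, yields the uniform bound $\int_M u_k^p\,\theta_k\w d\theta_k^n=1$. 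I also note that the identity $G=F^{-1}$ is soft: from $F_k\circ F_k^{-1}=\mathrm{id}$ together with the uniform convergences $F_k\to F$, $F_k^{-1}\to G$ and the continuity of $F$ and $G$, one passes to the limit to get $F\circ G=G\circ F=\mathrm{id}$, so $F$ is a homeomorphism with inverse $G$.

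The $L^p$ bound sits exactly at the critical exponent, so the real work is to upgrade it to a uniform $L^\infty$ bound, and here the hypothesis that $F_k$ and $F_k^{-1}$ converge to homeomorphisms is decisive. For any nice $E\subset M$ the same change of variables gives $\int_E u_k^p\,\theta_k\w d\theta_k^n = \mathrm{vol}_{\theta_k}\!\big(F_k(E)\big)$, and since $F_k\to F$ uniformly with $F$ a homeomorphism and $\theta_k\to\theta_0$, these quantities converge to $\mathrm{vol}_{\theta_0}\!\big(F(E)\big)$. Thus the measures $u_k^p\,\theta_k\w d\theta_k^n$ converge weakly to a non-atomic limit (the pullback of $\theta_0\w d\theta_0^n$ by $F$); in particular no fixed positive fraction of the mass of $u_k^p$ can concentrate at a single point.

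With this non-concentration in hand I would run the standard blow-up argument of CR Yamabe theory. If $\sup_M u_k\to\infty$ along a subsequence, I rescale $u_k$ in Folland--Stein normal coordinates (Theorem \ref{t;FS}) about points where the maximum is nearly attained; using the Heisenberg approximation of $\theta_k$ and $\Delta_{\theta_k}$ there, the rescaled functions converge to a positive entire solution of the Yamabe equation on $(\mathcal H^n,\vartheta_0)$ (cf. Proposition \ref{p;equi}), a standard bubble carrying a definite quantum of $L^p$-mass concentrated at the blow-up point. This contradicts the non-atomicity just established, so $\sup_M u_k\leq C$ uniformly; applying the same reasoning to the conformal factor of $F_k^{-1}$ gives a uniform lower bound $u_k\geq c>0$. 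Once $u_k$ is pinned between two positive constants, Proposition \ref{p;est} together with the subelliptic estimates of Theorem \ref{t;subelliptic} bootstrap the Yamabe equation to uniform bounds $\|u_k\|_{C^l}\leq C_l$ for every $l$. Feeding these into the first-order system satisfied by $F_k$ --- the conformality $F_k^*\theta_k=u_k^{p-2}\theta_k$ together with $(F_k)_*H^{1,0}_k=H^{1,0}_k$, read in Folland--Stein coordinates as a subelliptic system with $u_k$ as data --- yields, by the same Schoen-type estimates, uniform bounds $\|F_k\|_{C^l}\leq C_l'$ for every $l$.

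Finally, uniform $C^l$ bounds for all $l$ combined with the given $C^0$-convergence $F_k\to F$ force, by Arzel\`a--Ascoli and uniqueness of limits, that $F_k\to F$ and $F_k^{-1}\to G=F^{-1}$ in $C^\infty$-sense; in particular $F$ is a smooth diffeomorphism. Passing to the limit in $(F_k)_*H^{1,0}_k=H^{1,0}_k$, using $J_k\to J_0$ and the now-established $C^1$-convergence, gives $F_*H^{1,0}_0=H^{1,0}_0$, so $F\in\Aut_{CR}(M,J_0)$. I expect the \emph{main obstacle} to be the uniform $L^\infty$ bound of the third paragraph: because the $L^p$ control is critical, without extra input the conformal factors could bubble, and the entire role of the hypothesis is that the convergence of both $F_k$ and $F_k^{-1}$ to homeomorphisms rules out this concentration.
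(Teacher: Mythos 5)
Your proposal is viable in its essentials, but it takes a genuinely different route from the paper. The paper's proof is \emph{local} and leans on the standing assumption $Y_k>0$ of that subsection: near each target point $q=F(p)$ it invokes the Fischer-Colbrie--Schoen local scalar flattening (using positivity of the Dirichlet principal eigenvalue of $L_{\theta_k}$) to produce $u_k>0$ with $L_{\theta_k}u_k=0$ on a fixed neighborhood $\wt W$ of $q$, and then measures the conformal factor of $F_k$ against the flattened structure $\tilde\theta_k=u_k^{p-2}\theta_k$, i.e.\ writes $F_k^*\tilde\theta_k=v_k^{p-2}\theta_k$. Since $\tilde\theta_k$ is scalar flat, $v_k$ solves the \emph{linear} equation $L_{\theta_k}v_k=0$; the same volume identity you use gives the critical $L^p$ bound, and the subelliptic mean value inequality for this linear equation converts it directly into a uniform $L^\infty$ bound --- no blow-up analysis, and no coefficient-regularity problem, since the right-hand side is zero and the bootstrap via Theorem \ref{t;subelliptic} is immediate. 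Your route is \emph{global}: the factor $u_k$ solves the nonlinear critical equation $L_{\theta_k}u_k=(R_k\circ F_k)\,u_k^{p-1}$, and you exclude bubbling by observing that the measures $u_k^p\,\theta_k\w d\theta_k^n=(F_k^{-1})_*(\theta_k\w d\theta_k^n)$ converge weakly to the non-atomic measure $G_*(\theta_0\w d\theta_0^n)$. This is a genuinely good use of the two-sided $C^0$ hypothesis, and it has the interesting feature of not using the sign of $Y_k$ at all. Both proofs ultimately rely on the same Schoen-type fact that $C^l$ control of the pseudoconformal factor controls the $C^l$ norms of the CR map, followed by Arzel\`a--Ascoli.

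Two caveats. First, of the two options you offer for handling the merely-$C^0$ coefficient $R_k\circ F_k$, the normalization option is not available here: in the positive Yamabe case, constant-scalar-curvature contact forms are neither unique nor known to depend compactly on the structure (this is precisely the compactness problem for the CR Yamabe equation, which can fail near the spherical structure), so you cannot renormalize $\theta_k$ while retaining $\theta_k\to\theta_0$ and uniform frame constants; the paper uses such a normalization only in the case $Y_k\le 0$, where Proposition \ref{p;unique} and Lemma \ref{l;nonpositive} supply uniqueness and uniform estimates. Your proof must therefore go through the coupled bootstrap, which does work (each cycle upgrades $u_k$, hence $F_k$, hence the coefficient, gaining roughly two orders of subelliptic regularity), but it, together with the blow-up step (rescaling in Folland--Stein coordinates, the energy quantum for entire positive solutions on $\mathcal{H}^n$ from the CR Sobolev inequality, and the Liouville step when the limiting constant is nonpositive), carries substantially more technical overhead than the paper's linear-equation argument. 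Second, these blow-up details are only sketched in your write-up; they are standard in spirit but would need to be carried out in the anisotropic (Heisenberg) setting to make the proof complete.
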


\begin{proof}
This lemma is a sequential version of Proposition 1.1' in \cite{Sc}. Let $\theta_k$ and $\theta_0$ be pseudohermitian structures for $J_k$ and $J_0$, respectively and suppose $\theta_k\rightarrow \theta_0$ in $C^\infty$-sense. For a given point $p\in M$, let $q_k = F_k(p)$ and $q=F(p)$. Let $q\in \wt U \subset\subset \wt V\subset\subset\wt W$ be relatively compact neighborhoods of $q$.  Since $q_k\rightarrow q$, we can assume that $q_k \in \wt U$ for every $k$. The fact $Y_k >0$ implies that the principal eigenvalue of $L_{\theta_k}$ on $M$ and hence the Dirichlet principal eigenvalue of $L_{\theta_k}$ on $\wt W$ are also positive for every $k$. Then the local scalar flattening argument by Fischer-Colbrie and Schoen (see \cite{FiS, Sc}), we have a positive $C^\infty$-smooth function $u_k$ on $\wt W$ such that $L_{\theta_k} u_k =0$ on $\wt W$ for every $k$. Multiplying a positive constant, we may assume that $u_k (q)=1$ for every $k$. Then the subelliptic theory Theorem \ref{t;FS} for sublaplacian and the Harnack principle (cf. Proposition 5.12 in \cite{JL1}) imply that $\{u_k\}$ has a convergent subsequence which tends to a positive function $u_0$ on the closure of  $\wt V$ in $C^\infty$-sense. We denote the convergent subsequence by $\{u_k\}$ again. Then $\tilde\theta_k = u_k^{p-2}\theta_k$ and $\tilde\theta_0=u_0^{p-2}\theta_0$ have the trivial Webster scalar curvatures on $\wt V$. From the equicontinuity of the sequence $\{F_k\}$, we can choose a neighborhood $W$ of $p$ such that $F_k(W) \in \wt U$ for every $k$. Let $v_k$ be a positive smooth function on $V$ defined by $F_k^* \tilde\theta_k = v_k^{p-2} \theta_k$. Then for every $k$, we have 
\beg\label{e;local}
L_{\theta_k} v_k = 0\quad \mbox{ on }W.
\eeg
We denote by $\mathrm{Vol}_{\tilde\theta_k}(\wt U)$ be the volume of $\wt U$ with respect to the volume form $\tilde\theta_k\w d\tilde\theta_k^n$. Since $\tilde\theta_k \rightarrow \tilde\theta_0$ in $C^\infty$-sense in $\wt V$, there exists a uniform bound $C$ of $\mathrm{Vol}_{\tilde\theta_k}(\wt U)$. Therefore, it turns out that
$$\int_W v_k^p \,\theta_k\w d\theta_k^n  = \int_W F^*_k (\tilde\theta_k \w d\tilde\theta_k) = \mathrm{Vol}_{\tilde\theta_k}(F_k(W))\leq \mathrm{Vol}_{\tilde\theta_k}(\wt U) \leq C$$
for every $k$. Fix a neighborhood $V\subset\subset W$ of $p$. Then the subelliptic mean-value inequality for \eqref{e;local} implies that there exists a constant $C$ such that $v_k(x) \leq C$ for every $x\in V$. We can also choose this $C$ indepedently on $k$ by the convergence of structures. Then for a given neighborhood $U\subset\subset V$ of $p$ and for each positive integer $l$, there exists a constant $C_l$ which is independent of $k$ such that 
$$\|v_k\|_{C^l(U)} \leq C_l$$
for every $k$, by Theorem \ref{t;FS}. Since each $F_k$ is pseudoconformal, the $C^l$-norm of $F_k$ on $U$ is completely determined by that of $v_k$ and is uniformly bounded on $U$. This yields that every subsequence of $\{F_k\}$ contains a subsequence converging in $C^l$-sense, for every positive integer $l$. Since $F_k$ converges to $F$ in $C^0$-sense on $M$ and since $M$ is compact, we conclude that $F_k$ converges to $F$ in  $C^\infty$-sense. By the same reason, $F^{-1}_k \rightarrow G$ in $C^\infty$-sense. It follows immediately that $F\in \Aut_{CR}(M, J_0)$ and $G=F^{-1}$.
\end{proof}

For a CR diffeomorphism $F :(M, \theta)\rightarrow (\wt M, \tilde\theta)$ between two pseudohermitian manifolds, we denote by $|F'|_{\theta, \tilde\theta}$ the pseudoconformal factor of $F$, thaat is, $F^*\tilde\theta = |F'|_{\theta,\tilde\theta} \,\theta$. We abbreviate it to $|F'|_\theta$ in case $(M, \theta) = (\wt M, \tilde\theta)$. 

\begin{lem}\label{l;expand}
Let $(M, \theta)$ and $(\wt M, \tilde\theta)$ be pseudohermitian manifolds of same dimension. Let $K$ be a relatively compact subset of $M$ and suppose that the Webster scalar curvature for $\tilde\theta$ vanishes on $\wt M$. Then there exist constants $r_0>0$ and $C>0$ such that for every CR diffeomorphism $F$ on a Carnot-Carath\'eodory ball $B_\theta (x, r)$ into $M$, 
$$B_{\tilde\theta}(F(x), C^{-1}\la r) \subset F(B_\theta(x,r)) \subset B_{\tilde\theta}(F(x), C\la r)$$ 
whenever $x\in K$ and $r\leq r_0/2$, where $\la = |F'|_{\theta,\tilde\theta}(x)$. The constant $C$ depends only on $r_0$, $K$ and uniform bounds of finite order derivatives of the CR and pseudohermitian structures of $(M, \theta)$. 
\end{lem}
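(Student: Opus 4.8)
The plan is to transport the whole picture to the flat Heisenberg model via Folland--Stein normal coordinates, and to use the hypothesis $\wt S\equiv 0$ to put the pseudoconformal factor of $F$ under the control of a Harnack inequality. Write $\theta'=F^*\tilde\theta=|F'|_{\theta,\tilde\theta}\,\theta$ and set $|F'|_{\theta,\tilde\theta}=v^{p-2}$ for a positive function $v$ on $B_\theta(x,r)$, so that $v(x)^{p-2}=\lambda$. Since $F$ is a pseudohermitian isometry from $(B_\theta(x,r),\theta')$ onto its image in $(\wt M,\tilde\theta)$, it carries the Webster scalar curvature of $\theta'$ to that of $\tilde\theta$; as the latter vanishes, the CR Yamabe equation \eqref{e;yamabe} applied to the change $\theta'=v^{p-2}\theta$ gives the homogeneous equation
\[
L_\theta v = S_{\theta'}\,v^{p-1}=0 \qquad\text{on } B_\theta(x,r).
\]
Throughout, the restrictions $x\in K$ and $r\le r_0/2$ ensure that the frame constants entering Theorem \ref{t;FS} and Theorem \ref{t;subelliptic} are bounded in terms of $K$, $r_0$ and finitely many derivatives of the structures of $(M,\theta)$, so that every constant produced below is uniform, as the statement requires.

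The first main step is to show that the pseudoconformal factor is essentially constant on the ball. Since $v>0$ solves the homogeneous subelliptic equation $L_\theta v=0$, the Harnack principle for $L_\theta$ (cf.\ Proposition 5.12 in \cite{JL1}, together with the subelliptic estimates of Theorem \ref{t;subelliptic}) yields a uniform constant $C$ with
\[
C^{-1}\lambda \le |F'|_{\theta,\tilde\theta}(y) \le C\lambda \qquad\text{for all } y\in B_\theta(x,r),
\]
after shrinking $r_0$ if necessary so that the Harnack chains joining points of $B_\theta(x,r)$ to the center remain inside the domain of $F$; the uniform comparison between Carnot--Carath\'eodory balls and Heisenberg gauge balls furnished by Theorem \ref{t;FS} keeps these chains of controlled length.

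With the factor pinned down, both inclusions follow from horizontal length distortion. On the contact distribution one has $F^*\mathcal{L}_{\tilde\theta}=|F'|_{\theta,\tilde\theta}\,\mathcal{L}_\theta$, so $F$ changes the $\tilde\theta$-length of any horizontal curve by a factor comparable to $\lambda$. Integrating along a $\theta$-minimizing geodesic from $x$ to a point $y\in B_\theta(x,r)$ (which stays in the ball, being everywhere within $\theta$-distance $<r$ of $x$) gives the outer inclusion $F(B_\theta(x,r))\subset B_{\tilde\theta}(F(x),C\lambda r)$. For the inner inclusion, the reciprocal bound $C^{-1}\lambda^{-1}\le |F'|^{-1}_{\theta,\tilde\theta}\circ F^{-1}\le C\lambda^{-1}$ shows that $F^{-1}$ shortens $\tilde\theta$-lengths by a factor comparable to $\lambda^{-1}$; running a maximality argument along a $\tilde\theta$-minimizing geodesic issuing from $F(x)$ and using that $F$ is an open map, such a geodesic of $\tilde\theta$-length $<C^{-1}\lambda r$ cannot exit $F(B_\theta(x,r))$ before reaching its endpoint, since its $F^{-1}$-preimage would otherwise have $\theta$-length $\ge r$, contradicting the factor bound. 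This yields $B_{\tilde\theta}(F(x),C^{-1}\lambda r)\subset F(B_\theta(x,r))$.

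The main obstacle is the tension between the Harnack inequality, which needs a little room around each point, and the fact that $F$---hence the equation $L_\theta v=0$---is given only on $B_\theta(x,r)$ itself. The crux is therefore to carry out the estimates on a fixed slightly smaller ball and then recover the full-radius, $\lambda$-uniform inclusions using the engulfing and doubling properties of Carnot--Carath\'eodory balls and their uniform comparison with Heisenberg gauge balls from Theorem \ref{t;FS}, while simultaneously guaranteeing that the minimizing horizontal geodesics used in both inclusions stay within the domain of definition. It is precisely this requirement that forces the threshold $r\le r_0/2$ and the uniform choice of $r_0$ in terms of $K$ and the structure bounds.
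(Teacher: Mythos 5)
Your overall strategy is the one the paper intends: the paper gives no proof of this lemma at all, merely citing (i) of Proposition 2.1$'$ of \cite{Sc} with the remark that it is a consequence of the subelliptic Harnack principle, and your reduction --- scalar flatness of $\tilde\theta$ makes the pseudoconformal factor $v$, defined by $F^*\tilde\theta=v^{p-2}\theta$, a positive solution of $L_\theta v=0$, to which one applies Harnack and then a length-distortion argument --- is exactly that route. The execution, however, has a genuine gap at the Harnack step. You assert a two-sided bound $C^{-1}\lambda\le |F'|_{\theta,\tilde\theta}\le C\lambda$ with a \emph{uniform} constant on all of $B_\theta(x,r)$, while the equation $L_\theta v=0$ is available only on $B_\theta(x,r)$ itself. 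No such uniform bound exists: the Harnack constant of a positive solution degenerates at the boundary of the region where the equation holds, because the number of chain balls needed to join $x$ to $y$ inside $B_\theta(x,r)$ tends to infinity as $y$ approaches $\partial B_\theta(x,r)$; and your proposed remedy of ``shrinking $r_0$'' cannot repair this, since the obstruction is scale invariant. The gap is not cosmetic. The CR analogue of the Kelvin inversion, restricted to a ball whose boundary passes through its pole, is a CR diffeomorphism onto an \emph{unbounded} scalar-flat region of the Heisenberg group, and its conformal factor is a positive $L$-harmonic function blowing up at that boundary point; so the upper Harnack bound, and with it an outer inclusion for the image of the full ball on which $F$ is defined, genuinely fail. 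This is precisely why in Schoen's proposition the map is given on a strictly larger concentric ball (whence the hypothesis $r\le r_0/2$), so that Harnack is applied on a compactly contained ball; your proof must either import that hypothesis, or restrict the outer inclusion to $F(B_\theta(x,r/2))$ and prove the inner inclusion by a bootstrap argument that tolerates the degenerating Harnack constant (which can be done, but you do not do it --- both of your inclusions lean on the false full-ball bound).

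There is also a quantitative error in the distortion step. With the paper's normalization $F^*\tilde\theta=|F'|_{\theta,\tilde\theta}\,\theta$, the Levi form pulls back with the factor $|F'|_{\theta,\tilde\theta}$, so the $\tilde\theta$-length of a horizontal curve is distorted by $|F'|_{\theta,\tilde\theta}^{1/2}$, not by $|F'|_{\theta,\tilde\theta}$; compare the Heisenberg dilation $\delta_t(z,s)=(tz,t^2s)$, for which $\delta_t^*\vartheta_0=t^2\vartheta_0$ while Carnot--Carath\'eodory distances scale by $t$. Your sentence deriving a length distortion ``comparable to $\lambda$'' from the Levi-form scaling by $\lambda$ is therefore off by a square root: the Harnack-plus-length argument produces balls of radius comparable to $\lambda^{1/2}r$, i.e., it proves the statement with $\lambda$ taken to be the length-stretch factor $|F'|_{\theta,\tilde\theta}^{1/2}(x)$ (Schoen's convention), and your derivation of the stated $\lambda r$ from the paper's convention silently conflates the two normalizations rather than reconciling them.
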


This lemma is a restatement of (\romannumeral1) of Proposition 2.1' in \cite{Sc}, which is a consequence of the subelliptic Harnack principle.

\medskip

Assume the contrary to the conclusion of Proposition \ref{p;compact} for the proof. Then  there exisrs a sequence $\{F_k\in \Aut_{CR}(M,J_k)\}$ such that $\sup_{x\in M} |F_k'|_{\theta_k}(x)\rightarrow \infty$ as $k\rightarrow \infty$,  thanks to Lemma \ref{l;conv}. Let $x_k\in M$ be a point of $M$ with $|F_k'|_{\theta_k}(x_k) = \sup_{x\in M} |F_k'|_{\theta_k}(x)$. Extracting a subsequence, we assume that $x_k \rightarrow x_0 \in M$ and $F_k(x_k)\rightarrow z_0$ as $k\rightarrow \infty$.  Choose $r>0$ small enough that the Carnot-Carath\'eodory balls satisfy $B_{\theta_k}(x_k, r) \subset\subset B_{\theta_k}(x_k, 2r) \subset\subset U$ for each $k$, where $U$ is a relatively compact neighborhood of $x_0$ in $M$, and $2r < r_0$ for $r_0$ given in Lemma \ref{l;expand}.

\medskip

\begin{lem}\label{l;y_0} There exists a subsequence $\{F_{k_j}: j=1,2,...\}$ of $\{F_k:k=1,2,..\}$ which admits a point $y_0\in M$ such that for every compact subset $K$ in $M\setminus\{y_0\}$, there exists $N>0$ such that $K\subset F_{k_j} (B_{\theta_{k_j}}(x_{k_j}, 2r))$ if $k_j>N$. Moreover, for the subsequence, one can choose the point $y_0$ independently of  $r>0$ as $r\rightarrow 0$.
\end{lem}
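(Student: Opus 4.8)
The plan is to recast the covering statement as the collapse of the \emph{complementary} region and then to control the pseudoconformal factor of $F_k$ away from its maximum point. Writing $\lambda_k=|F_k'|_{\theta_k}(x_k)=\sup_M|F_k'|_{\theta_k}\to\infty$ and $A_k(\rho)=M\setminus B_{\theta_k}(x_k,\rho)$, the inclusion $K\subset F_k(B_{\theta_k}(x_k,2r))$ for a compact $K\subset M\setminus\{y_0\}$ is equivalent to $F_k(A_k(2r))$ being eventually disjoint from $K$. I would therefore define $y_0$ as the accumulation point of the images $F_k(A_k(\rho))$ and prove that, along a subsequence, $\mathrm{diam}_{\theta_k}F_k(A_k(\rho))\to0$ for every fixed $\rho>0$, with $F_k(A_k(\rho))$ converging to a single point $y_0$. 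Since this collapse is asserted for all radii at once and the sets $A_k(\rho)$ are nested, the limit point is forced to be the same for every $\rho$, which is exactly the asserted independence of $y_0$ from $r$.

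The heart of the matter is the estimate
\[
\sup_{A_k(\rho)}|F_k'|_{\theta_k}\longrightarrow 0\qquad(k\to\infty)
\]
for each fixed $\rho>0$. To obtain it I would first flatten the structure near $z_0=\lim_k F_k(x_k)$: since $Y_k>0$ the Dirichlet principal eigenvalue of $L_{\theta_k}$ on a fixed neighborhood $\tilde W\ni z_0$ is positive, so the local scalar-flattening procedure of Fischer-Colbrie and Schoen already used in Lemma \ref{l;conv} produces $\tilde\theta_k$ with vanishing Webster scalar curvature on $\tilde W$, converging to $\tilde\theta_0$ in $C^\infty$ on a smaller neighborhood. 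With this scalar-flat target I can apply the expansion Lemma \ref{l;expand} to $F_k$ on the ball $B_{\theta_k}(x_k,c/\lambda_k)$; its image fills a C-C ball about $z_0$ of a fixed radius independent of $k$, so $F_k(B_{\theta_k}(x_k,2r))$ contains a fixed ball about $z_0$ for all large $k$. The pseudoconformal factor of $F_k$, equivalently the positive solution $v_k$ of $L_{\theta_k}v_k=0$ obtained by pulling back $\tilde\theta_k$ as in Lemma \ref{l;conv}, then obeys the subelliptic Harnack and mean-value inequalities underlying Theorem \ref{t;FS}; combined with the uniform bound $\int_M|F_k'|_{\theta_k}^{n+1}\,\theta_k\wedge d\theta_k^n=\int_M\theta_k\wedge d\theta_k^n$ coming from $F_k$ being a diffeomorphism, this forces $|F_k'|_{\theta_k}$ to decay to $0$ uniformly outside any fixed ball about $x_k$.

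Granting the displayed estimate, the collapse follows quickly. Covering the compact set $A_k(\rho/2)$ by boundedly many small C-C balls and applying Lemma \ref{l;expand} on each (after a local flattening of the target), the image of each ball has C-C radius bounded by a fixed multiple of $\bigl(\sup_{A_k(\rho/2)}|F_k'|_{\theta_k}\bigr)$ times its radius, whence $\mathrm{diam}_{\theta_k}F_k(A_k(\rho))\to0$. Since $M$ is compact, passing to a further subsequence makes $F_k(A_k(\rho))$ converge to a single point $y_0$, and the nestedness in $\rho$ identifies this $y_0$ across all radii. For compact $K\subset M\setminus\{y_0\}$ one has $d_{\theta_0}(K,y_0)>0$, so $F_k(A_k(2r))\subset B_{\theta_k}(y_0,\epsilon)$ is eventually disjoint from $K$, giving $K\subset F_k(B_{\theta_k}(x_k,2r))$.

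\textbf{The main obstacle} is the single-point concentration built into the displayed estimate: a priori $|F_k'|_{\theta_k}$ could blow up, or the complement $F_k(A_k(\rho))$ could fail to contract, at more than one point. Ruling this out is where the geometry genuinely enters, and I expect it to be the delicate step. I would argue that a second blow-up point would, through Lemma \ref{l;expand}, force the $F_k$-images of two disjoint shrinking balls each to fill a region of fixed volume, which is incompatible with the injectivity of $F_k$ together with the uniform total-volume identity above; alternatively one rescales $\theta_k$ near $x_k$ by $\lambda_k$ and identifies the blow-up limit with a single automorphism of $(\mathcal{H}^n,\vartheta_0)$, whose conformal factor decays away from one point. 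The directionality of Lemma \ref{l;expand}, which requires the target to be scalar-flat and hence can only be arranged on a fixed neighborhood, is the technical reason that the fixed-ball covering near $z_0$ must be bootstrapped by the Harnack inequality to all of $M\setminus\{y_0\}$.
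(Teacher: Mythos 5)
Your reformulation of the lemma as collapse of $F_k(A_k(\rho))$, and the local flattening near $z_0$ showing that $F_k(B_{\theta_k}(x_k,2r))$ eventually contains a fixed ball about $z_0$, are both sound. But your whole proof hinges on the displayed estimate $\sup_{A_k(\rho)}|F_k'|_{\theta_k}\to 0$, and that is exactly what you do not prove: you name it yourself as ``the main obstacle'' and offer only heuristics. Neither heuristic closes the gap. The Harnack/mean-value bootstrap cannot run, because the pulled-back factor $v_k$ satisfies $L_{\theta_k}v_k=0$ only on $F_k^{-1}(\wt W)$, where $\wt W$ is the fixed neighborhood of $z_0$ on which the target has been flattened; on $A_k(\rho)$ the image of $F_k$ is precisely the part of $M$ you cannot yet locate, so no equation --- and hence no Harnack chain --- is available there. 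This is a circularity, not a technicality. The volume argument also fails: the identity $\int_M|F_k'|_{\theta_k}^{n+1}\,\theta_k\wedge d\theta_k^n=1$ together with injectivity is perfectly compatible with two or more concentration points, since disjoint image regions of fixed volume need only have total volume at most $1$; volume counting alone yields no contradiction. Finally, the rescaling/blow-up alternative (identifying a limit automorphism of $(\mathcal{H}^n,\vartheta_0)$) is a compactness theory in its own right and is not available from the lemmas you quote.

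The paper closes exactly this gap with a device you did not consider: a dichotomy combined with the Green function. Either every sequence $y_k\in M\setminus F_k(B_{\theta_k}(x_k,2r))$ converges to $z_0$ --- in which case one sets $y_0=z_0$ and the lemma is immediate --- or for some $r$ and $\epsilon>0$ one can pick $y_k$ in the complement of the image with $d(y_k,F_k(x_k))\geq\epsilon$ and $y_k\to y_0$. Then, letting $G_k$ be the Green function of $L_{\theta_k}$ with pole at $y_k$ (it exists because $Y_k>0$), normalized by $\min G_k=1$, the structure $\tilde\theta_k=G_k^{p-2}\theta_k$ is Webster scalar flat on all of $M\setminus\{y_k\}$ --- a set that contains the \emph{entire} image $F_k(B_{\theta_k}(x_k,2r))$, precisely because the pole was chosen in the complement of that image. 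Hence Lemma \ref{l;expand} applies on the whole ball with this target, giving $B_{\tilde\theta_k}(F_k(x_k),C^{-1}\la_k r)\subset F_k(B_{\theta_k}(x_k,r))$ with $\la_k=|F_k'|_{\theta_k,\tilde\theta_k}(x_k)\geq|F_k'|_{\theta_k}(x_k)\to\infty$ (using $G_k\geq1$), while $\tilde\theta_k\to G_0^{p-2}\theta_0$ in local $C^\infty$-sense on $M\setminus\{y_0\}$ and $F_k(x_k)$ stays in a fixed compact subset of $M\setminus\{y_0\}$; so the inner balls exhaust every compact subset of $M\setminus\{y_0\}$, and the same choice of $y_k$ works for all $r'\leq r$, giving the independence of $y_0$ from $r$. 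Note that single-point concentration is never addressed separately --- placing the pole of the Green function in the complement of the image is what replaces that analysis. If you want to salvage your approach, this global scalar flattening with pole at $y_k$, rather than the local flattening at $z_0$, is the missing idea.
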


\begin{proof} Suppose that for every $r>0$, there exists no sequence $\{y_k \in M\setminus {F_k(B_{\theta_k}(x_k, 2r))}\}$ such that $d (y_k, F_k(x_k) )> \epsilon$ for any given $\epsilon >0$, where $d$ is the sub-Riemannian distance induced from $\theta_0$. Then it turns out every sequence $\{y_k \in M\setminus F_k(B_{\theta_k}(x_k, 2r))\}$ converges to $z_0$. In this case, we just need to put $y_0 = z_0$. 

Now supose that  for some $r>0$, there exists a sequence $\{y_k \in M\setminus {F_k(B_{\theta_k}(x_k, 2r))}\}$ such that $d (y_k, F_k(x_k) )> \epsilon$ for infinitely many $k$ for some $\epsilon >0$. Extracting a subsequence, we may assume that $y_k\rightarrow y_0\in M$ and $d (y_k, F_k(x_k) )> \epsilon$ for every $k$ so that the sequence $\{F_k(x_k)\}$ is relatively compact in $M\setminus\{y_0\}$. Let $G_k$ be the Green function for $L_{\theta_k}$ with pole at $y_k$. Existence of the Green function follows from the fact that $Y_k  >0$ (see for instance, \cite{CCP, Ga}). We normalize $G_k$ by the condition $\min_{M\setminus \{y_k\}} G_k =1$.  Since each $G_k >0$ and $L_{\theta_k} G_k =0$ on $M\setminus\{y_k\}$, we may assume $\{G_k : k=1,2,...\}$ converges to a positive function $G_0$ on $M\setminus\{y_0\}$ in local $C^\infty$-sense, by extracting a subsequence if necessary. Let $\tilde\theta_k = G_k^{p-2} \theta_k$. Then $\tilde\theta_k$ is a pseudohermitiaan structure on $M\setminus \{y_k\}$ which is Webster scalar flat. Therefore, if we denote $\la_k = |F_k'|_{\theta_k,\tilde\theta_k}(x_k)$, then Lemma \ref{l;expand} implies that there exists a constant $C$ independent of $k$ such that 
$$B_{\tilde\theta_k}(F_k(x_k), C^{-1}\la_k r) \subset F_k(B_{\theta_k}(x_k,r)) \subset B_{\tilde\theta_k}(F_k(x_k), C\la_k r)$$ 
Since $G_k \geq 1$ and $|F_k'|_{\theta_k} (x_k) \rightarrow \infty$, $\la_k$ also tends to infinity  as $k\rightarrow\infty$. Therefore, a relatively compact subset $K$ in $M\setminus\{y_0\}$ should be included in $F_k(B_{\theta_k}(x_k,r))$ for every sufficiently large $k$, since $F_k(x_k)$ lies on a fixed relatively compact subset of $M\setminus\{y_0\}$ and $\tilde\theta_k \rightarrow \tilde\theta_0 =G_0^{p-2}\theta_0$ in local $C^\infty$-smooth sense on $M\setminus\{y_0\}$. Note that the choice of the sequence $\{y_k\}$ and $y_0$ still works for every $r' \leq r$. This yields the independence of $y_0$ on $r$ as $r\rightarrow 0$.
\end{proof}

As a consequence of Lemma \ref{l;y_0}, it turns out that $M\setminus\{y_0\}$ is simply connected and complete with respect to the sub-Riemannian distance induced by $\tilde\theta_0$. 

\medskip

Extracting a subsequence, we assume Lemma \ref{l;y_0} holds for the entire sequence $\{F_k\}$. Choose $y_k \in M\setminus F_k (B_{\theta_k} (x_k, 2r))$ which tends to $y_0$. Let $v_k$ and $f_k$ be real-valued functions on $B_{\theta_k}(x_k, 2r)$ defined by 
$$v_k^{p-2} = |F_k'|_{\theta_k, \tilde\theta_k} = e^{2f_k},$$
where $G_k$ is the normalized Green function for $L_{\theta_k}$ with pole at $y_k$ which converges to a positive function $G_0$ in local $C^\infty$-smooth sense on $M\setminus\{y_0\}$ as $k\rightarrow \infty$, and $\tilde\theta_k = G_k^{p-2} \theta_k$. 
Since $L_{\theta_k} v_k =0$, we see that there exists a constsnt $C$ independent of $k$ such that $ |F_k'|_{\theta_k, \tilde\theta_k} \geq C \la_k$ on $B_{\theta_k} (x_k, r)$ by the Harnack principle, where $\la_k = |F_k'|_{\theta_k, \tilde\theta_k}(x_k)$. Let $\{Z_k \in \Gamma(H^{1,0}_k)\}$ be a sequence of vector fields on $U$ which tends to $Z_0 \in \Gamma (H^{1,0}_0)$ as $k\rightarrow \infty$, where $H^{1,0}_k$ represents the $(1,0)$-bundle with respect to $J_k$. Since $f_k = \frac{1}{n} \,\log v_k$, we have 
$$Z_k f_k = \frac{Z_k v_k}{nv_k}$$ 
for every $k$. Since $L_{\theta_k} v_k =0$ on $B_{\theta_k}(x_k, 2r)$, the subellitic estimates Theorem \ref{t;subelliptic} implies that $Z_k f_k$ is uniformly bounded on $B_{\theta_k}(x_k, r)$ for every $k$. So is $\ov Z_k f_k$, and if $W_k$ is another sequence of vector fields, then $Z_k W_k f_k$ and $Z_k \ov W_k f_k$ are all uniformly bounded on $B_{\theta_k}(x_k, r)$ as $k\rightarrow \infty$. Therefore, if we denote by $\mathbf{R}_k$ and $\wt{\mathbf{R}}_k$ the Webster curvature tensors for $\theta_k$ and $\tilde\theta_k$ respectively, then \eqref{e;trans_curv} implies that 
$$|\wt{\mathbf{R}}_k|^2_{\tilde\theta_k}(F_k (x)) \leq C\la_k^{-2} \left\{|\mathbf{R}_k|^2_{\theta_k}(x) + A_k |\mathbf{R}_k|_{\theta_k}(x) + B_k \right\}$$ 
for every $x\in B_{\theta_k}(x_k, r)$, where $A_k$ and $B_k$ are some functions of the first and second covariant derivatives of $f_k$ with respect to the pseudohermitian structure $\theta_k$, which are uniformly bounded on $B_{\theta_k}(x_k, r)$ as $k\rightarrow \infty$. Since $\la_k \rightarrow \infty$ and $|\mathbf{R}_k|_{\theta_k}$ is uniformly bounded on $B_{\theta_k}(x_k, r)$ for every $k$, it turns out that $|\wt{\mathbf{R}}_k|_{\tilde\theta_k}\rightarrow 0$ uniformly on every compact subset of $M\setminus\{y_0\}$ by Lemma \ref{l;y_0}. Therefore, we see that the pseudohermitian manifold $(M\setminus\{y_0\}, \tilde\theta_0)$ has trivial Webster curvature. Similar argument with \eqref{e;trans_tor} implies that the torsion tensor of $\tilde\theta_0$ is also trivial. Therefore, we can conclude that $(M\setminus\{y_0\},\tilde\theta_0)$ is equivalent to the standard pseudohermitian structure of the Heisenberg group and therefore, $(M, J_0)$ is CR equivalent to the sphere by the removable singularity theorem. This contradicts to the hypothesis that $\Aut_{CR}(M, J_0)$ is compact and hence yields the conclusion of Proposition \ref{p;compact}

\end{document}